\documentclass[12pt,letterpaper]{article}


\relpenalty=10000
\binoppenalty=10000

\usepackage[latin1]{inputenc}
\usepackage{subfigure}
\usepackage{calc}
\usepackage{array}
\usepackage{subfigure}
\usepackage[indent,bf]{caption}
\usepackage{rotating}
\usepackage{setspace}
\usepackage{longtable}
\usepackage{rotating}
\usepackage{amssymb, amsmath, amsfonts, epsfig}

\usepackage{amsthm}
\usepackage{epsfig,latexsym}
\usepackage{multicol}
\usepackage{pgf,tikz}
\usepackage{verbatim}
\usepackage{url}
\usetikzlibrary{arrows}

 
%
\theoremstyle{plain}

\newtheorem{thm}{Theorem}[section]

\newtheorem{lem}[thm]{Lemma}
\newtheorem{prop}[thm]{Proposition}
\newtheorem{cor}[thm]{Corollary}

\newtheorem{defn}[thm]{Definition}


\newtheorem*{123}{1-2-3 Conjecture}
\newtheorem*{12}{1-2 Conjecture}
\newtheorem*{list123}{List 1-2-3 Conjecture}
\newtheorem*{list12}{List 1-2 Conjecture}

\newtheorem*{CN}{Combinatorial Nullstellensatz}

\newcommand{\per}{\textup{per}\,}
\newcommand{\mind}{\textup{mind}}
\newcommand{\tmind}{\textup{tmind}}
\newcommand{\pind}{\textup{pind}}
\newcommand{\ch}{\textup{ch}}
\newcommand{\M}{\mathbb{M}}

\newcommand{\Z}{\mathbb{Z}}

\newcommand{\F}{\mathbb{F}}

\newcommand{\col}{\textup{\rm col}}

\newcommand{\Se}{\chi_\Sigma^e}

\newcommand{\St}{\chi_\Sigma^t}

\newcommand{\chSe}{\ch_\Sigma^e}
\newcommand{\chPe}{\ch_ \Pi ^e}

\newcommand{\chSt}{\ch_\Sigma^t}

\begin{document}
\sloppy

\title{Bounding the weight choosability number of a graph}

 \author{Ben Seamone\footnote{Dept. d'Informatique et de recherche op\'erationnelle, Universit\'e de Montr\'eal, Montr\'eal, QC, \texttt{seamone@iro.umontreal.ca}}}


\maketitle

\begin{abstract}
Let $G = (V,E)$ be a graph, and for each $e \in E(G)$, let $L_e$ be a list of real numbers.  Let $w:E(G) \to \cup_{e \in E(G)}L_e$ be an edge weighting function such that $w(e) \in L_e$ for each $e \in E(G)$, and
let $c_w$ be the vertex colouring obtained by $c_w(v) = \sum_{e \ni v}w(e)$.
 We desire the smallest possible $k$ such that, for any choice of $\{L_e \,|\, e \in E(G)\}$ where $|L_e| \geq k$ for all $e \in E(G)$, there exists an edge weighting function $w$ for which $c_w$ is proper.  The smallest such value of $k$ is the weight choosability number of $G$.

This colouring problem, introduced by Bartnicki, Grytczuk and Niwczyk (2009), is the list variation of the now famous 1-2-3 Conjecture due to Karo{\'n}ski, {\L}uczak, and Thomason (2004).
Bartnicki et al. develop a method for approaching the problem based on the Combinatorial Nullstellensatz.  Though they show that some particular classes of graphs have weight choosability number at most $3$, it was known whether their method could be extended to prove a bound which holds for all admissible graphs.
In this paper, we show that this is indeed possible, showing that
every graph is $(\Delta + d + 1)$-weight choosable, where $\Delta$ is the graph's maximum degree and $d$ is its degeneracy.
In fact, more general results on total weight choosability are provided, where one assigns weights to edges and vertices.
Improved bounds are also established for some classes of graph products.

\end{abstract}

\section{Introduction}

A graph $G = (V,E)$ will be simple and loopless unless otherwise stated.  An {\bf edge $k$-weighting}, $w$, of $G$ is a an assignment of a number from $[k] := \{1, 2,\ldots,k\}$ to each $e \in E(G)$, that is $w: E(G) \rightarrow[k]$.  Karo{\'n}ski, {\L}uczak, and Thomason \cite{KLT04} conjecture that, for every graph without a component isomorphic to $K_2$, there is an edge $3$-weighting such the function $S: V(G) \rightarrow \Z$ given by $S(v) = \sum_{e \ni v} w(e)$ is a proper colouring of $V(G)$ (in other words, any two adjacent vertices have different sums of incident edge weights).  If such a proper colouring $S$ exists, then $w$ is a {\bf vertex colouring by sums}.  Let $\Se(G)$ be the smallest value of $k$ such that a graph $G$ has an edge $k$-weighting which is a vertex colouring by sums.  A graph $G$ is {\bf nice} if it contains no component isomorphic to $K_2$.  Karo{\'n}ski, {\L}uczak, and Thomason's conjecture (frequently called the ``1-2-3 Conjecture") may be expressed as follows:

\begin{123}
If $G$ is a nice graph, then $\Se(G) \leq 3$.
\end{123}

The best known upper bound on $\Se(G)$ is due to Kalkowski, Karo{\'n}ski and Pfender \cite{KKP1}, who show that $\Se(G) \leq 5$ if $G$ is nice.

In \cite{BGN09}, Bartnicki, Grytczuk and Niwczyk consider a ``list variation'' of the 1-2-3 Conjecture.  Assign to each edge $e \in E(G)$ a list of $k$ real numbers, say $L_e$, and choose a weight $w(e) \in L_e$ for each $e \in E(G)$.  The resulting function $w: E(G) \rightarrow \cup_{e \in E(G)} L_e$ is called an {\bf edge $k$-list-weighting}.  Given a graph $G$, the smallest $k$ such that any assignment of lists of size $k$ to $E(G)$ permits an edge $k$-list-weighting which is a vertex colouring by sums is denoted $\chSe(G)$ and called the {\bf edge weight choosability number} of $G$.  The following, stronger, conjecture is proposed in \cite{BGN09}:

\begin{list123}
If $G$ is a nice graph, then $\chSe(G) \leq 3$.
\end{list123}

A similar problem to the List 1-2-3 Conjecture for graphs is solved for digraphs in \cite{BGN09}, where a constructive method is used to show that $\chSe(D) \leq 2$ for any digraph $D$.  An alternate proof which uses Alon's Combinatorial Nullstellensatz \cite{A99} may be found in \cite{Ben2}.



Another variant of the 1-2-3 Conjecture allows each vertex $v \in V(G)$ to receive a weight $w(v)$; the colour of $v$ is then $w(v) + \sum_{e \ni v} w(e)$ rather than simply $\sum_{e \ni v} w(e)$.  Such a function $w: V \cup E \rightarrow [k]$ is called a {\bf total $k$-weighting}.  The smallest $k$ for which $G$ has a total $k$-weighting that is a proper colouring by sums is denoted $\St(G)$.  A similar list generalization as above may be considered; the smallest $k$ such that the list version holds is denoted $\chSt(G)$.  The following two conjectures are posed in \cite{PW10} and \cite{PW11} \& \cite{WZ} respectively:

\begin{12}
If $G$ is any graph, then $\St(G) \leq 2$.
\end{12}

\begin{list12}
If $G$ is any graph, then $\chSt(G) \leq 2$.
\end{list12}

Though the 1-2 Conjecture remains open, Kalkowski \cite{Kal} has shown that a total weighting $w$ of $G$ which properly colours $V(G)$ by sums always exists with $w(v) \in \{1,2\}$ and $w(e) \in \{1,2,3\}$ for all $v \in V(G), e \in E(G)$.

In \cite{WZ}, Wong and Zhu study {\bf $(k,l)$-total list-assignments}, which are assignments of lists of size $k$ to the vertices of a graph and lists of size $l$ to the edges.  If any $(k,l)$-total list-assignment of $G$ permits a total weighting which is a vertex colouring by sums, then $G$ is {\bf $(k,l)$-weight choosable}.  Obviously, if a graph $G$ is $(k,l)$-weight choosable, then $\chSt(G) \leq \max\{k,l\}$.  The List 1-2 Conjecture is equivalent to the statement that every graph is $(2,2)$-choosable.  Wong and Zhu \cite{WZ} further conjecture that every nice graph is $(1,3)$-weight choosable, a strengthening of the List 1-2-3 Conjecture.  A recent important breakthrough by Wong and Zhu \cite{WZ23} shows that every graph is $(2,3)$-weight choosable and hence $\chSt(G) \leq 3$ for every graph $G$.  There is also a good deal of literature on graph classes which are $(k,l)$-weight choosable for small values of $k$ and $l$ (see \cite{BGN09}, \cite{PY12}, \cite{PW11}, \cite{WZ}, \cite{WWZ}, \cite{WYZ}).  Of particular note, it is shown in \cite{PY12} that every nice $d$-degenerate graph is $(1,2d)$-weight choosable.
However, whether or not there exists a constant $l$ such that every nice graph is $(1,l)$-weight choosable remains unsettled.

The main purpose of this paper is show how the methods used in \cite{BGN09} may be extended to obtain a bound of the $(1,l)$-weight choosability of any nice graph $G$.
%
In Chapter 2, we present a Combinatorial Nullstellensatz approach to the List 1-2-3 and List 1-2 Conjectures.
Chapter 3 contains some intermediary lemmas on matrix permanents and colouring polynomials.
Chapters 2 and 3 are largely reliant on the work of \cite{BGN09,PW11}; results are presented in near full detail, with examples, in the interest of keeping the article self-contained, and some results are generalized where necessary.
Chapter 4 contains the main result of this paper, where we show how Bartnicki et al.'s ``permanent method" may be applied to general graphs.
The bound obtained in the general case is, unfortunately, weaker than that of Pan and Yang \cite{PY12}, however we are able to obtain improved bounds for some graph products in Chapter 5.

\section{The permanent method and Alon's Nullstellensatz}\label{CN}


Let $G = (V,E)$ be a graph, with $E(G) = \{e_1, \ldots, e_m\}$ and $V(G) = \{v_1, \ldots, v_n\}$.  Associate with each $e_i$ the variable $x_i$ and with each $v_j$ the variable $x_{m+j}$.  Define two more variables for each $v_j \in V(G)$:  $X_{v_j} = \sum_{e_i \ni v_j} \, x_i$ and $Y_{v_j} = x_{m+j} + X_{v_j}$.  For an orientation $D$ of $G$, define the following two polynomials, where $l = m+n$:
	\begin{eqnarray*}
	P_D(x_1, \ldots, x_m) = \prod_{(u,v) \in E(D)}(X_v - X_u) \\
	T_D(x_1, \ldots, x_l) = \prod_{(u,v) \in E(D)}(Y_v - Y_u).
	\end{eqnarray*}
	
Let $w$ be an edge weighting of $G$.  By letting $x_i = w(e_i)$ for $1 \leq i \leq m$, $w$ is a proper vertex colouring by sums if and only if $P_D(w(e_1), \ldots, w(e_m)) \neq 0$.  A similar conclusion can be made about $T_D$ if $w$ is a total weighting of $G$.  

This leads us to the problem of determining when the polynomials $P_D$ and $T_D$ do not vanish everywhere, i.e., when there exist values of the variables for which the polynomial is non-zero.   Alon's famed Combinatorial Nullstellensatz gives sufficient conditions to guarantee that a polynomial does not vanish everywhere.

\begin{CN}[Alon \cite{A99}]
Let $\F$ be an arbitrary field, 
and let $f=f(x_1,\ldots,x_n)$ be a polynomial in $\F[x_1,\ldots,x_n]$.
Suppose the total degree of $f$ is $\sum_{i=1}^{n}t_i$, 
where each $t_i$ is a~nonnegative integer, 
and suppose the coefficient of $\prod_{i=1}^{n}x_i^{t_i}$ in $f$ is nonzero.
If $S_1,\ldots,S_n$ are subsets of $\F$ with $|S_i|>t_i$, then
there are $s_1\in S_1, s_2\in S_2,\ldots,s_n\in S_n$ so that
\[
 f(s_1,\ldots,s_n)\neq 0.
\]
\end{CN}

For a polynomial $P \in \F[x_1,\ldots,x_l]$ 
and a monomial term $M$ of $P$, let $h(M)$ be the largest exponent of any variable in $M$.  
The {\bf monomial index} of $P$, denoted $\mind(P)$, is the minimum $h(M)$ taken over all monomials of $P$.
Define the graph parameters $\mind(G) := \mind(P_D)$ and $\tmind(G) := \mind(T_D)$, where $D$ is an orientation of $G$.  
Note that, given a graph $G$ and two orientations $D$ and $D'$, $P_D(x_1, \ldots, x_l) = \pm P_{D'}(x_1, \ldots, x_l)$; a similar argument holds for $T_D$.  
The parameters $\mind(G)$ and $\tmind(G)$ are hence well-defined.
Note that, for any graph $G$, $\tmind(G) \leq \mind(G)$. 

The following lemma is obtained 
by applying the Combinatorial Nullstellensatz to $P_D$ and $T_D$:

\begin{lem}\label{mind}
Let $G$ be a graph and $k$ a positive integer.
	\begin{enumerate}
	\item \textup{(Bartnicki, Grytczuk, Niwczyk \cite{BGN09})} If $G$ is nice and $\mind(G) \leq k$, then $\chSe(G) \leq k+1$.
	\item \textup{(Przyby{\l}o, Wo\'zniak \cite{PW11})} If $\tmind(G) \leq k$, then $\chSt(G) \leq k+1$.
	\end{enumerate}
\end{lem}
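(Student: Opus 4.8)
The plan is to deduce Lemma~\ref{mind} directly from the Combinatorial Nullstellensatz applied to the polynomials $P_D$ and $T_D$, once an orientation $D$ of $G$ has been fixed. I would begin by recalling the reduction established just before the lemma: for an edge weighting $w$ of $G$, setting $x_i = w(e_i)$ makes $w$ a proper vertex colouring by sums precisely when $P_D(w(e_1),\ldots,w(e_m)) \neq 0$, and analogously for total weightings and $T_D$. Thus it suffices to show that, given lists $L_{e_i}$ (and in the total case lists $L_{v_j}$) of the appropriate sizes, one can pick values from these lists at which the relevant polynomial does not vanish.

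For part~(1), fix an orientation $D$ and let $M = \prod_{i=1}^{m} x_i^{t_i}$ be a monomial of $P_D$ that achieves $h(M) = \mind(P_D) = \mind(G) \leq k$, so $t_i \leq k$ for every $i$. The total degree of $P_D$ is $|E(D)| = |E(G)|$, and the total degree of $M$ is $\sum_{i=1}^m t_i$; since $P_D$ is homogeneous of degree $|E(G)|$, every monomial of $P_D$ — in particular $M$ — has total degree exactly $|E(G)|$, so the degree bookkeeping required by the Nullstellensatz is automatic. The coefficient of $M$ in $P_D$ is nonzero by the choice of $M$. Now apply the Combinatorial Nullstellensatz with $S_i := L_{e_i}$: since $|L_{e_i}| = k+1 > k \geq t_i$, there exist $s_i \in L_{e_i}$ with $P_D(s_1,\ldots,s_m) \neq 0$. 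Setting $w(e_i) = s_i$ gives an edge $(k+1)$-list-weighting that is a vertex colouring by sums. Since the list assignment was arbitrary, $\chSe(G) \leq k+1$. The hypothesis that $G$ is nice is exactly what guarantees that $P_D$ is not the zero polynomial (a $K_2$ component contributes a factor $X_v - X_u = x_i - x_i = 0$), which is needed for $\mind(G)$ to be defined and for the argument to make sense.

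For part~(2), the argument is identical with $P_D$ replaced by $T_D$ and the variable set enlarged to $x_1,\ldots,x_l$ with $l = m+n$: choose a monomial $M$ of $T_D$ with $h(M) = \tmind(G) \leq k$, use that $T_D$ is homogeneous of degree $|E(G)|$ so the total degree condition is free, note its coefficient is nonzero, and apply the Nullstellensatz with $S_i = L_{e_i}$ for the edge variables and $S_{m+j} = L_{v_j}$ for the vertex variables, all of size $k+1 > k$. This produces a total $(k+1)$-list-weighting colouring $V(G)$ properly by sums, hence $\chSt(G) \leq k+1$. No niceness hypothesis is needed here because a $K_2$ component contributes a factor $Y_v - Y_u = (x_{m+v} + x_i) - (x_{m+u} + x_i) = x_{m+v} - x_{m+u}$, which is not identically zero.

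The only genuine subtlety — and the step I would treat most carefully — is the matching of total degrees. The Combinatorial Nullstellensatz as stated requires that the \emph{total} degree of $f$ equal $\sum_i t_i$, not merely that $\prod_i x_i^{t_i}$ appear with nonzero coefficient; one must therefore observe that $P_D$ and $T_D$ are each homogeneous (being products of linear forms $X_v - X_u$, resp.\ $Y_v - Y_u$), so that \emph{every} monomial, including the minimizing one, has total degree equal to $\deg P_D = |E(G)|$. With homogeneity in hand the condition is satisfied automatically and the rest is a routine invocation of the Nullstellensatz; everything else is bookkeeping about list sizes.
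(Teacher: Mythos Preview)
Your proof is correct and follows exactly the approach the paper indicates: the paper does not spell out a proof of Lemma~\ref{mind} but simply states that it ``is obtained by applying the Combinatorial Nullstellensatz to $P_D$ and $T_D$,'' and you have carefully filled in those details, including the key observation that $P_D$ and $T_D$ are homogeneous so that the total-degree hypothesis of the Nullstellensatz is automatically met by any monomial achieving the minimum~$h(M)$. Your remarks on why niceness is needed for $P_D$ but not for $T_D$ are also accurate and worth including.
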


The following proposition will also be useful.

\begin{prop}\label{componentbound}
If $G$ is a graph with connected components $G_1,G_2,\ldots,G_k$, then $\mind(G) = \max\{\mind(G_i) \,:\, 1 \leq i \leq k\}$.
\end{prop}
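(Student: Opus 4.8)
The plan is to analyze the structure of the polynomial $P_D$ when $G$ is disconnected. Fix an orientation $D$ of $G$, and let $D_i$ be the induced orientation on the component $G_i$. Since no edge of $G$ joins vertices in distinct components, for each directed edge $(u,v) \in E(D)$ the expression $X_v - X_u$ involves only variables $x_i$ corresponding to edges lying in the component containing $u$ and $v$. Consequently the product defining $P_D$ factors as $P_D = \prod_{i=1}^{k} P_{D_i}$, where $P_{D_i}$ is the corresponding polynomial for $G_i$, and crucially the variable sets appearing in the distinct factors $P_{D_i}$ are pairwise disjoint.

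The key step is then to understand how $\mind$ behaves under a product of polynomials in disjoint sets of variables. I claim that if $P = \prod_i Q_i$ where the $Q_i$ involve pairwise disjoint variable sets, then $\mind(P) = \max_i \mind(Q_i)$. For the inequality $\mind(P) \le \max_i \mind(Q_i)$: pick, for each $i$, a monomial $M_i$ of $Q_i$ realizing $h(M_i) = \mind(Q_i)$; because the variable sets are disjoint, the product $\prod_i M_i$ is a genuine monomial of $P$ (no cancellation can merge it with another product of monomials, and its coefficient is the product of the corresponding coefficients, which is nonzero), and $h(\prod_i M_i) = \max_i h(M_i) = \max_i \mind(Q_i)$. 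For the reverse inequality $\mind(P) \ge \max_i \mind(Q_i)$: any monomial $M$ of $P$, again by disjointness of the variable sets and absence of cancellation across factors, decomposes uniquely as $M = \prod_i M_i$ with $M_i$ a monomial of $Q_i$, so $h(M) = \max_i h(M_i) \ge \max_i \mind(Q_i)$; taking the minimum over all monomials $M$ of $P$ gives $\mind(P) \ge \max_i \mind(Q_i)$.

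Combining the two observations, $\mind(G) = \mind(P_D) = \max_i \mind(P_{D_i}) = \max_i \mind(G_i)$, which is the claim.

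The main obstacle to make fully rigorous is the no-cancellation assertion: one must be sure that when multiplying out $\prod_i Q_i$, monomials built from different choices of terms in the factors cannot collide and cancel. This follows because the variable sets are disjoint, so two products $\prod_i M_i$ and $\prod_i M_i'$ of monomials (one from each factor) are equal as monomials if and only if $M_i = M_i'$ for every $i$; hence the coefficient of any monomial of $P$ is a single product of coefficients from the factors and is nonzero whenever each constituent $M_i$ genuinely appears in $Q_i$. A small point worth noting is that $P_{D_i}$ is not literally a polynomial in the same ambient ring as $P_D$ unless we regard it inside $\F[x_1,\dots,x_l]$ using only the variables of $G_i$; with that convention everything goes through, and one should also dispose of the trivial degenerate case where some $G_i$ has no edges (then $P_{D_i} = 1$, contributing $h = 0$ to the maximum, consistent with the convention that an edgeless component imposes no constraint).
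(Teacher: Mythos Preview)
The paper states Proposition~\ref{componentbound} without proof, treating it as a routine observation, so there is no argument in the paper to compare against. Your proof is correct and is precisely the natural justification: the factorization $P_D = \prod_{i=1}^{k} P_{D_i}$ into polynomials on pairwise disjoint variable sets, followed by the lemma that $\mind\big(\prod_i Q_i\big) = \max_i \mind(Q_i)$ when the $Q_i$ involve disjoint variables. The no-cancellation argument is the only point requiring care, and you handle it correctly: because the variable sets are disjoint, two products $\prod_i M_i$ and $\prod_i M_i'$ of monomials from the respective factors coincide as monomials if and only if $M_i = M_i'$ for every $i$, so the coefficient of each monomial of $P$ is a single nonzero product rather than a sum that could vanish. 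Your remark about edgeless components is also apt; one should add that the proposition is only meaningful when each $P_{D_i}$ is a nonzero polynomial, i.e.\ when no component is isomorphic to $K_2$ (otherwise $P_D \equiv 0$ and $\mind(G)$ is undefined), which is implicit in the paper's standing assumption that $G$ is nice wherever the proposition is invoked.
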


In \cite{BGN09}, Bartnicki et al. show how one may study the permanent of particular $\{-1,0,1\}$-matrices in order to gain insight on $\mind(G)$ and $\tmind(G)$.  
Let $\M(m,n)$ denote the set of all real valued matrices  with $m$ rows and $n$ columns, and $\M(m)$ denote the set of square $m \times m$ matrices.  The permanent of a matrix $A \in \M(m)$, denoted $\per{A}$,
 is calculated as follows: 
$$\per{A} = \sum_{\sigma \in S_m} \prod_{i=1}^m a_{i,\sigma(i)}.$$  

The permanent may also be defined for a general matrix $A \in \M(m,n)$ if $n \geq m$.  Let $Q_{m,n}$ denote the set of sequences of length $m$ with entries from $[n]$ which contain no repetition of elements; such sequences are also known as {\bf $m$-permutations} from $[n]$.  For example, $Q_{2,3} = \{ (1,2), (1,3 ), (2,1 ), (2,3 ), (3,1), (3,2)\}$.  The permanent of $A$ is defined as follows:
$$\per{A} = \sum_{\alpha \in Q_{m,n}} \prod_{i=1}^m a_{i,\alpha(i)} = \sum_{i = 1}^{{n \choose m}} \per{B_i},$$
where $\{B_i \,|\, 1 \leq i \leq {n \choose m}\}$ is the set of all $m \times m$ submatrices of $A$. 




The {\bf permanent rank} of a matrix $A$ (not necessarily square) is the size of the largest square submatrix of $A$ having nonzero permanent.  Let $A^{(k)} = [A A \cdots A]$ \label{kcopydefn} denote the matrix formed of $k$ consecutive copies of $A$.  If $A$ has size $m \times l$, then the {\bf permanent index} of $A$ is the smallest $k$, if it exists, such that $A^{(k)}$ has permanent rank $m$.  This parameter is denoted $\pind(A)$. If such a $k$ does not exist, then $\pind(A) := \infty$.  Alternately, $\pind(A)$ is the smallest $k$ such that a square matrix of size $m$ having nonzero permanent can be constructed by taking columns from $A$, each column taken no more than $k$ times.

There are three matrices related to directed graphs which will be of interest:
%

\begin{defn}
Let $G = (V,E)$ be a graph, $V(G) = \{v_1, \ldots, v_n\}$, $E(G) = \{e_1, \ldots, e_m\}$.  For an orientation $D$ of $G$, define the matrices $A_D \in \M(m)$, $B_D \in \M(m,n)$, and $M_D \in \M(m, m+n)$ as follows:
	\begin{itemize}
	\item $A_D = (a_{i,j})$ where $a_{i,j} =  \left\{\begin{array}{ll}
					1 & \textup{if $e_j$ is incident with the head of $e_i$} \\
					-1 & \textup{if $e_j$ is incident with the tail of $e_i$} \\
					0 & \textup{otherwise}
					\end{array} \right.$
	\item $B_D = (b_{i,j})$ where $b_{i,j} =  \left\{\begin{array}{ll}
					1 & \textup{if $v_j$ is the head of $e_i$} \\
					-1 & \textup{if $v_j$ is the tail of $e_i$} \\
					0 & \textup{otherwise}
					\end{array} \right.$
	\item $M_D = \left( A_D \mid B_D\right)$.
	\end{itemize}
\end{defn}

%

%

%

%

%

%

%

%


The following lemmas, which relate the matrices $A_D$, $B_D$, and $M_D$ to the polynomials $P_D$ and $T_D$, provide the fundamental link between the graphic polynomials of interest and matrix permanents:

\begin{lem}[Bartnicki, Grytczuk, Niwczyk \cite{BGN09}]\label{coefficientrelation}
Let $A = (a_{ij}) \in \M(m)$ have finite permanent index.  If $P(x_1, \ldots, x_m) = \prod_{i = 1}^m(a_{i1}x_1 + \ldots + a_{im}x_m)$, then $\mind(P) = \pind(A_D)$.
\end{lem}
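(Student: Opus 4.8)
The plan is to establish a direct correspondence between the monomials of the product polynomial $P(x_1,\ldots,x_m)$ and the ways of selecting one "active" variable from each linear factor, and then to translate the condition "some monomial has all exponents at most $k$" into a statement about permanents of matrices built from columns of $A$.

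First I would expand $P(x_1,\ldots,x_m) = \prod_{i=1}^m (a_{i1}x_1 + \cdots + a_{im}x_m)$ by choosing, for each factor $i$, an index $\phi(i) \in [m]$ to contribute; this yields $P = \sum_{\phi:[m]\to[m]} \left(\prod_{i=1}^m a_{i,\phi(i)}\right)\prod_{j=1}^m x_j^{|\phi^{-1}(j)|}$. Grouping by the resulting exponent vector, the coefficient of the monomial $\prod_j x_j^{t_j}$ (with $\sum_j t_j = m$) is $\sum_{\phi} \prod_i a_{i,\phi(i)}$, the sum over all $\phi$ realizing that exponent pattern. The key observation is that $h(M) \le k$ for a monomial $M$ means precisely that the exponent vector $(t_1,\ldots,t_m)$ has every entry at most $k$, equivalently that the map $\phi$ uses each column index at most $k$ times.

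Next I would reinterpret such a sum over $\phi$ as a permanent. If we form the $m \times mk$ matrix $A^{(k)}$ of $k$ copies of $A$ and select any $m$ of its columns to form a square submatrix $B$, then $\per B = \sum_{\sigma \in S_m} \prod_i b_{i,\sigma(i)}$, and each choice of columns (with multiplicity at most $k$ for each original column of $A$) together with $\sigma$ corresponds exactly to a map $\phi$ using each index at most $k$ times, and vice versa. So: $P$ has a monomial $M$ with $h(M) \le k$ and nonzero coefficient if and only if there is a choice of $m$ columns from $A$, each taken at most $k$ times, forming a square matrix of nonzero permanent — which by definition is exactly the statement $\pind(A) \le k$. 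Taking the least such $k$ on each side gives $\mind(P) = \pind(A)$. (I note the statement as written says $\pind(A_D)$, but the proof only uses the matrix $A$ appearing in the definition of $P$; one reads $A_D = A$.)

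The main obstacle — really a bookkeeping subtlety rather than a deep difficulty — is the possibility of cancellation: when several $\phi$ give the same exponent vector, the coefficient is a signed sum of products $\prod_i a_{i,\phi(i)}$, and it could vanish even though individual terms do not. This is handled cleanly by the permanent reformulation, since $\per B$ is by construction this same signed sum (the sign from $\sigma$ matching the grouping, after one checks that reordering columns of $B$ changes $\per B$ only by whether one views it as a permanent of the ordered or unordered column multiset — and the permanent is invariant under column permutation, so no spurious sign appears). Thus establishing the bijection between (column-multiset, $\sigma$) pairs and the maps $\phi$, and verifying it respects the grouping by exponent vector, is the crux; once that is in place the two extremal quantities $\mind(P)$ and $\pind(A)$ are equal essentially by definition, including the degenerate case where neither exists (both are $\infty$).
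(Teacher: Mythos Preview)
Your approach is essentially the same as the paper's (the paper omits the proof, giving only the key identity that the coefficient of $x_1^{t_1}\cdots x_m^{t_m}$ in $P$ equals $\per(M)/(t_1!\cdots t_m!)$, where $M$ is the $m\times m$ matrix in which column $a_j$ of $A$ appears $t_j$ times). One small correction: the correspondence you describe between pairs (column choice, $\sigma$) and maps $\phi$ is not a bijection but a $(t_1!\cdots t_m!)$-to-one surjection, which is exactly where the factorial in the paper's formula comes from; since this factor is nonzero, your nonvanishing equivalence and hence the equality $\mind(P)=\pind(A)$ are unaffected.
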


The proof is omitted, but the result follows from the fact that the coefficient of $x_1^{k_1}x_2^{k_2}\cdots x_m^{k_m}$ in the expansion of $P$ is equal to $\frac{\per(M)}{k_1!\cdots k_m!}$ where $M$ is the $m \times m$ matrix where column $a_j$ from $A$ appears $k_j$ times.  
Lemma \ref{coefficientrelation} immediately implies the following vital link between the (total) monomial index of a graph $G$ and the permanent index of $A_D$ (respectively, $T_D$) for any orientation $D$ of $G$:

\begin{lem}\label{mindtopind}
Let $D$ be an orientation of a graph $G$.
	\begin{enumerate}
	\item \textup{(Bartnicki, Grytczuk, Niwczyk \cite{BGN09})} If $G$ is nice, then $\mind(G) = \pind(A_D)$.
	\item \textup{(Przyby{\l}o, Wo\'zniak \cite{PW11})} For any graph $G$, $\tmind(G) = \pind(M_D)$. 
	\end{enumerate}
\end{lem}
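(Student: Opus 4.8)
The plan is to realize each of $P_D$ and $T_D$ as a product of $m$ linear forms, read off its coefficient matrix as $A_D$ (respectively $M_D$), and then invoke Lemma~\ref{coefficientrelation}---for $T_D$, a routine rectangular extension of it. Throughout, fix an orientation $D$ with $E(D) = \{e_1,\ldots,e_m\}$ (the case $E(G)=\emptyset$ being trivial).

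For part (1): the $i$-th factor of $P_D$ is $X_v - X_u = \sum_{e_j \ni v}x_j - \sum_{e_j \ni u}x_j$, where $e_i = (u,v)$ in $D$, so the coefficient of $x_j$ in it is $+1$ if $e_j$ meets the head of $e_i$, $-1$ if $e_j$ meets the tail, and $0$ otherwise; the only seemingly ambiguous case, $j = i$, resolves to $0$ because the $+1$ from $X_v$ cancels the $-1$ from $X_u$, and in a simple graph $j=i$ is the only index for which $e_j$ can be incident with both endpoints of $e_i$. Thus $P_D = \prod_{i=1}^m(a_{i1}x_1 + \cdots + a_{im}x_m)$ with $A_D = (a_{ij})$. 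Before applying Lemma~\ref{coefficientrelation} I would check that $A_D$ has finite permanent index exactly when $G$ is nice: since $\R[x_1,\ldots,x_m]$ is an integral domain, $P_D \equiv 0$ iff some factor $X_v - X_u$ is the zero polynomial, iff the adjacent vertices $u,v$ have equal incident-edge sets, which in a simple graph forces $uv$ to be the unique edge at each of $u$ and $v$, i.e. a $K_2$ component; and by the coefficient formula recalled just after Lemma~\ref{coefficientrelation} (the coefficient of $\prod_j x_j^{k_j}$ is $\per(M)/\prod_j k_j!$ for the $m\times m$ matrix $M$ taking column $j$ of $A_D$ exactly $k_j$ times, $\sum_j k_j = m$), $P_D \not\equiv 0$ iff some such $\per(M) \neq 0$ iff $A_D^{(k)}$ has permanent rank $m$ for some $k$ iff $\pind(A_D) < \infty$. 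Hence for nice $G$, Lemma~\ref{coefficientrelation} gives $\mind(G) = \mind(P_D) = \pind(A_D)$.

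For part (2): I would first record that Lemma~\ref{coefficientrelation} holds verbatim when $A \in \M(m,l)$ is rectangular with $l \geq m$ and $P = \prod_{i=1}^m(\sum_{j=1}^l a_{ij}x_j)$; the proof is unchanged, as the coefficient of a degree-$m$ monomial $\prod_j x_j^{k_j}$ is still $\per(M)/\prod_j k_j!$ for the $m\times m$ matrix $M$ repeating column $j$ of $A$ exactly $k_j$ times, so $\mind(P) \leq k$ iff $A^{(k)}$ has permanent rank $m$, i.e. $\mind(P) = \pind(A)$. Now the $i$-th factor of $T_D$ is $Y_v - Y_u = (x_{m+q} - x_{m+p}) + (X_v - X_u)$, where $v_q$ and $v_p$ are the head and tail of $e_i$: its edge-variable coefficients form the $i$-th row of $A_D$ computed above, and its vertex-variable coefficients form the vector with $+1$ in coordinate $q$, $-1$ in coordinate $p$, and $0$ elsewhere---exactly the $i$-th row of $B_D$. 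So $T_D$ is the product-of-linear-forms polynomial built from $M_D = (A_D \mid B_D)$, and it is never identically zero, since for distinct vertices $u \neq v$ the variables $x_{m+p} \neq x_{m+q}$ cannot cancel; thus $\pind(M_D) < \infty$ for every graph $G$. The rectangular Lemma~\ref{coefficientrelation} then yields $\tmind(G) = \mind(T_D) = \pind(M_D)$.

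The manipulations are routine; the points requiring care are the evaluation of the diagonal entry $a_{i,i}=0$ (the single overlap in the definition of $A_D$ for a simple graph, resolved by cancellation), the extension of Lemma~\ref{coefficientrelation} to rectangular coefficient matrices needed for (2), and the observation that the ``nice'' hypothesis in (1) is exactly what makes $P_D$---equivalently $A_D$---non-degenerate, an issue that does not arise for $T_D$.
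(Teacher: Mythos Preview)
Your proposal is correct and follows exactly the route the paper takes: the paper simply states that Lemma~\ref{coefficientrelation} ``immediately implies'' Lemma~\ref{mindtopind}, and you have spelled out precisely why---identifying $A_D$ (resp.\ $M_D$) as the coefficient matrix of the product-of-linear-forms polynomial $P_D$ (resp.\ $T_D$). Your extra care with the diagonal entry $a_{i,i}=0$, the rectangular extension of Lemma~\ref{coefficientrelation} needed for $M_D$, and the equivalence of niceness with $\pind(A_D)<\infty$ are all legitimate details that the paper leaves implicit.
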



Lemmas \ref{mind} and \ref{mindtopind} imply the following relationship between $\chSe(G)$ and $A_D$, and $\chSt(G)$ and $T_D$:

\begin{cor}\label{pind}
Let $G$ be a graph, $D$ an orientation of $G$, and $k$ a positive integer.
	\begin{enumerate}
	\item If $G$ is nice and $\pind(A_D) \leq k$, then $\chSe(G) \leq k+1$.
	\item If $\pind(M_D) \leq k$, then $\chSt(G) \leq k+1$.
	\end{enumerate}
\end{cor}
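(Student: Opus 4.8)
The proof will be a direct composition of Lemma~\ref{mind} and Lemma~\ref{mindtopind}; no new ideas are needed, since these two lemmas were set up precisely to be chained together. The strategy is to translate the permanent-index hypothesis into a (total) monomial-index bound via Lemma~\ref{mindtopind}, and then feed that bound into Lemma~\ref{mind} to conclude the bound on the (total) weight choosability number.

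For part~(1), suppose $G$ is nice and $\pind(A_D) \le k$. Since $k$ is a finite positive integer, $A_D$ has finite permanent index, so Lemma~\ref{mindtopind}(1) applies and gives $\mind(G) = \pind(A_D) \le k$; note in particular that the right-hand side does not depend on the choice of orientation $D$, consistent with $\mind(G)$ being a genuine graph parameter. Applying Lemma~\ref{mind}(1) to the inequality $\mind(G) \le k$ then yields $\chSe(G) \le k+1$, as desired.

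For part~(2) the argument is identical with $A_D$ replaced by $M_D$, with $\mind$ replaced by $\tmind$, and with the ``nice'' hypothesis dropped: from $\pind(M_D) \le k$ and Lemma~\ref{mindtopind}(2) we obtain $\tmind(G) = \pind(M_D) \le k$, and then Lemma~\ref{mind}(2) gives $\chSt(G) \le k+1$. There is essentially no obstacle in this corollary; the only point requiring a moment's care is to observe that the hypothesis $\pind(A_D) \le k$ (respectively $\pind(M_D) \le k$) already guarantees the finiteness of the permanent index that is implicitly assumed in Lemmas~\ref{coefficientrelation} and~\ref{mindtopind}, so those results may be invoked without further justification.
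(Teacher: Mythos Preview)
Your proof is correct and follows exactly the approach indicated in the paper: the corollary is stated there as an immediate consequence of Lemmas~\ref{mind} and~\ref{mindtopind}, and you have simply spelled out that two-step composition.
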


Consider the following illustrative example.  Let $D$ be the digraph in Figure \ref{matrixex} and let $G$ be its underlying simple graph. 
\begin{figure}[h!]
\centering
\scalebox{0.7}{
\begin{tikzpicture}
\clip(-3,-0.7) rectangle (7,4.2);
\draw [line width=1.3pt] (0,3.46)-- (-2,0);
\draw [line width=1.3pt] (0,3.46)-- (4,3.46);
\draw [line width=1.3pt] (2,0)-- (0,3.46);
\draw [line width=1.3pt] (6,0)-- (4,3.46);
\draw [line width=1.3pt] (-2,0)-- (2,0);
\draw [line width=1.3pt] (2,0)-- (6,0);
\draw [-triangle 45, line width=1.2pt] (0,3.46) -- (-1.09,1.57);
\draw [-triangle 45, line width=1.2pt] (2,0) -- (0.95,1.82);
\draw [-triangle 45, line width=1.2pt] (0,3.46) -- (2.2,3.46);
\draw [-triangle 45, line width=1.2pt] (6,0) -- (4.91,1.88);
\draw [-triangle 45, line width=1.2pt] (2,0) -- (4.2,0);
\draw [-triangle 45, line width=1.2pt] (-2,0) -- (0.2,0);
\fill [color=black] (-2,0) circle (4.0pt);
\draw[color=black] (-1.95,-0.45) node {\Large $v_3$};
\fill [color=black] (2,0) circle (4.0pt);
\draw[color=black] (2.05,-0.45) node {\Large $v_4$};
\fill [color=black] (0,3.46) circle (4.0pt);
\draw[color=black] (0,3.9) node {\Large $v_1$};
\fill [color=black] (6,0) circle (4.0pt);
\draw[color=black] (6.05,-0.45) node {\Large $v_5$};
\fill [color=black] (4,3.46) circle (4.0pt);
\draw[color=black] (4,3.9) node {\Large $v_2$};
\draw[color=black] (-1.4,1.7) node {\Large $e_2$};
\draw[color=black] (2,3.8) node {\Large $e_1$};
\draw[color=black] (1.46,1.7) node {\Large $e_3$};
\draw[color=black] (5.46,1.7) node {\Large $e_4$};
\draw[color=black] (0.05,-0.35) node {\Large $e_5$};
\draw[color=black] (4.05,-0.35) node {\Large $e_6$};
\end{tikzpicture}
}
\caption{A digraph used to illustrate $A_D$, $B_D$, and $M_D$}\label{matrixex}
\end{figure}
\\
The associated polynomial, $P_D$, is
	\begin{small}
	\begin{align*}
	P_D(x_1, \ldots, x_6) =\, &(x_1 + x_4 - x_1 - x_2 - x_3)\times(x_2 + x_5 - x_1 - x_2 - x_3) \\
	& \times (x_1 + x_2 + x_3 - x_3 - x_5 - x_6) \times (x_1 + x_4 - x_4 - x_6) \\
	& \times (x_3 + x_5 + x_6 - x_2 - x_5) \times(x_4 + x_6 - x_3 - x_5 - x_6) \\
	=\, &(x_4 - x_2 - x_3) \times(x_5 - x_1 - x_3) \times(x_1 + x_2 - x_5 - x_6) \\
	& \times (x_1 - x_6) \times(x_3 + x_6 - x_2) \times(x_4 - x_3 - x_5).
	\end{align*}
	\end{small}
\indent Recalling the definition of $M_D$, note that the coefficients of each factor in $P_D$ correspond to the entries in each row of $A_D$:
\[M_D = [A_D\,|\,B_D] =
\left\lgroup \begin{array}{cccccc|ccccc}
0 & -1 & -1 & 1 & 0 & 0 & -1 & 1 & 0 & 0 & 0 \\
-1 & 0 & -1 & 0 & 1 & 0 & -1 & 0 & 1 & 0 & 0 \\
1 & 1 & 0 & 0 & -1 & -1 & 1 & 0 & 0 & -1 & 0 \\
1 & 0 & 0 & 0 & 0 & -1 & 0 & 1 & 0 & 0 & -1 \\
0 & -1 & 1 & 0 & 0 & 1 & 0 & 0 & -1 & 1 & 0 \\
0 & 0 & -1 & 1 & -1 & 0 & 0 & 0 & 0 & -1 & 1 \\
\end{array} \right\rgroup.\]

Since $\per{A_D} = -4 \neq 0$, we have $\pind(A_D) = 1$ (each column from $A_D$ is chosen once).  This implies, by Lemma \ref{mindtopind}, that $\mind(G) = 1$ (i.e. the term $x_1x_2x_3x_4x_5x_6$ has nonzero coefficient).  Hence, $\chSe(G) \leq 2$ by the Combinatorial Nullstellensatz.  Since there are adjacent vertices of equal degree in $G$, we have $\Se(G) \neq 1$, implying $\chSe(G) \neq 1$ and so $\chSe(G) = 2$.  Furthermore, since no column of $B_D$ was needed to find a submatrix of $M_D$ having nonzero permanent, we conclude that we may assign lists of size $1$ to each vertex as well and so $G$ is $(1,2)$-weight choosable.


%



\section{Some intermediary results on permanent indices and monomial indices}

The major results of this paper are proven by establishing bounds on $\mind(G)$ and $\tmind(G)$ using the permanent method.  One important tool is the following lemma, a generalization of a similar result in \cite{BGN09}:

\begin{lem}[Przyby{\l}o, Wo\'zniak \cite{PW11}]\label{lincomb}
Let $A$ be an $m \times l$ matrix, and let $L$ be an $m \times m$ matrix where each column of $L$ is a linear combination of columns of $A$.  Let $n_j$ denote the number of columns of $L$ in which the $j^{\textup{th}}$ column of $A$ appears with nonzero coefficient.  If $\per{L} \neq 0$, then \mbox{$\pind(A) \leq \textup{max}\,\{n_j \,|\, j = 1, \ldots l\}$}.
\end{lem}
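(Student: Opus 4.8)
The plan is to exploit the multilinearity of the permanent together with a bookkeeping argument on how many times each column of $A$ is reused. Write $L = (L_1 \mid L_2 \mid \cdots \mid L_m)$ and let $A_1, \ldots, A_l$ be the columns of $A$, so that $L_k = \sum_{j=1}^{l} c_{jk} A_j$ for suitable scalars $c_{jk}$; by definition $n_j = |\{\, k : c_{jk} \neq 0 \,\}|$. Since the permanent is a linear function of each column separately, I would expand $\per L$ one column at a time to obtain the finite sum
\[
\per L \;=\; \sum_{(j_1, \ldots, j_m) \in [l]^m} \left( \prod_{k=1}^{m} c_{j_k k} \right) \, \per\!\left( A_{j_1} \mid A_{j_2} \mid \cdots \mid A_{j_m} \right).
\]
(Here the indices $j_1, \ldots, j_m$ need not be distinct, so the matrices appearing may have repeated columns; this is harmless since they are still square of order $m$.)

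Next, since $\per L \neq 0$ by hypothesis, at least one summand on the right is nonzero, so there is a tuple $(j_1, \ldots, j_m)$ with $\prod_{k=1}^m c_{j_k k} \neq 0$ and $\per(A_{j_1} \mid \cdots \mid A_{j_m}) \neq 0$. Put $M := (A_{j_1} \mid \cdots \mid A_{j_m})$: this is a square $m \times m$ matrix, each of whose columns is a column of $A$, and $\per M \neq 0$. It remains only to bound the multiplicity of each column of $A$ in $M$. Fix $j \in [l]$; column $A_j$ appears in $M$ exactly $|\{\, k : j_k = j \,\}|$ times, and whenever $j_k = j$ the factor $c_{j_k k} = c_{jk}$ is one of the nonzero factors of $\prod_k c_{j_k k}$, hence $c_{jk} \neq 0$. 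Thus $\{\, k : j_k = j \,\} \subseteq \{\, k : c_{jk} \neq 0 \,\}$, so $A_j$ occurs in $M$ at most $n_j \leq \max\{\, n_j : j = 1, \ldots, l \,\}$ times.

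Finally, I would invoke the characterization of $\pind$: we have exhibited a square matrix of order $m$ with nonzero permanent built from the columns of $A$, using each column no more than $\max\{\, n_j : j = 1, \ldots, l\,\}$ times, which is exactly the definition of $\pind(A) \leq \max\{\, n_j : j = 1, \ldots, l\,\}$. Every step here is elementary, so there is no genuine obstacle; the only point requiring care is the final counting step, matching ``$A_j$ used with nonzero coefficient in the $k$-th column of $L$'' to ``$j_k = j$ in the surviving monomial of the expansion,'' and of course the use of the hypothesis $\per L \neq 0$ to guarantee that some term of the (finite) expansion survives.
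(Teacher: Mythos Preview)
Your argument is correct. The paper does not supply its own proof of this lemma; it is quoted from Przyby{\l}o and Wo\'zniak \cite{PW11} and used as a black box. What you have written is the standard proof: expand $\per L$ by multilinearity in the columns, pick a surviving term, and read off both the nonvanishing permanent and the multiplicity bound from that term. There is nothing to add or correct.
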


We will also find the following theorem useful, which gives a method for constructing graphs in a way that preserves the property of having low monomial index:

\begin{thm}[Bartnicki, Grytczuk, Niwczyk \cite{BGN09}]\label{addtwins}
Let $G$ be a simple graph with $\mind(G) \leq 2$.  Let $U$ be a nonempty subset of $V(G)$.  If $F$ is a graph obtained by adding two new vertices $u,v$ to $V(G)$ and joining them to each vertex of $U$, and $H$ is a graph obtained from $F$ by joining $u$ and $v$, then $\mind(F), \mind(H) \leq 2$.
\end{thm}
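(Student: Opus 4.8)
The plan is to work directly with the graphic polynomials, exploiting that every factor of $P_D$ is a \emph{linear} form in the edge variables. Since $\mind(G)=\mind(P_D)\le 2$, fix an orientation $D$ of $G$ and a monomial $\mu$ of $P_D$ in which every variable has exponent at most $2$ and whose coefficient $c_\mu$ is nonzero. Write $U=\{w_1,\dots,w_p\}$ with $p\ge 1$, and for $F$ (and, with one additional edge, for $H$) use the orientation extending $D$ in which each edge $uw_i$ is oriented $(u,w_i)$, each edge $vw_i$ is oriented $(w_i,v)$, and — for $H$ only — $uv$ is oriented $(u,v)$; write $a_i$, $b_i$, $c$ for the corresponding new edge variables.

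For $F$ one computes $P_{D_F}=P_D\cdot Q_F$, where $Q_F$ is the product over $i$ of the two new linear factors $X_{w_i}+b_i-\sum_{j\ne i}a_j$ (from $a_i$) and $\sum_{j\ne i}b_j-X_{w_i}-a_i$ (from $b_i$), with $X_{w_i}$ the old contribution of $w_i$. I claim the monomial $\mu\cdot\prod_{i=1}^p a_ib_i$ — whose exponents are all at most $2$ — has nonzero coefficient in $P_{D_F}$, which yields $\mind(F)\le 2$. Because $P_{D_F}$ is homogeneous and $P_D$ involves only old variables, the only factorisation of this monomial as (monomial of $P_D$)$\cdot$(monomial of $Q_F$) is $\mu$ times $\prod_i a_ib_i$, so its coefficient equals $c_\mu$ times the coefficient of $\prod_i a_ib_i$ in $Q_F$; the latter is the permanent of the $2p\times 2p$ sign matrix recording which of $a_1,\dots,a_p,b_1,\dots,b_p$ may be taken from each new factor and with what sign. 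The decisive point is a parity observation: in any contributing term each $a_k$ is taken with coefficient $-1$ (either from an $a_j$-factor with $j\ne k$, or from the $b_k$-factor) and each $b_k$ with coefficient $+1$, so \emph{every} contributing term carries the common sign $(-1)^p$; since the diagonal choice (take $b_i$ from the $a_i$-factor and $-a_i$ from the $b_i$-factor) is admissible, the permanent is $(-1)^pN$ with $N\ge 1$, and the coefficient of the monomial is a nonzero multiple of $c_\mu$.

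For $H$ the same scheme needs a twist. Now $P_{D_H}=P_D\cdot Q_H$, where $Q_H$ has an extra factor $\sum_j b_j-\sum_j a_j$ coming from $uv$ (the variable $c$ cancels inside this factor), while the $a_i$- and $b_i$-factors acquire a $-c$, resp.\ $+c$, term. The naive choice $\mu\cdot c\prod_i a_ib_i$ fails: the $uv$-factor forces $c$ to be drawn from an $a_i$- or $b_i$-factor, and those two options contribute with opposite signs, so the coefficient cancels — indeed to $0$ when $p=1$. Instead I would use $\mu\cdot b_1^2\big(\prod_{i=2}^p b_i\big)\big(\prod_{i=1}^p a_i\big)$, which again has all exponents at most $2$ and contains no $c$; by the same homogeneity bookkeeping its coefficient in $P_{D_H}$ is $c_\mu$ times the coefficient of $b_1^2\prod_{i\ge 2}b_i\prod_i a_i$ in $Q_H$, which counts the admissible ways to pick one non-$c$ variable from each of the $2p+1$ new factors so that $b_1$ is chosen twice and every other variable once. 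The parity observation survives verbatim — each $a_k$ is taken with $-1$, each $b_k$ (including both copies of $b_1$) with $+1$ — so all contributions have sign $(-1)^p$, and taking $b_i$ from each $a_i$-factor, $-a_i$ from each $b_i$-factor, and $b_1$ from the $uv$-factor is admissible; hence the coefficient is again a nonzero multiple of $c_\mu$ and $\mind(H)\le 2$.

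I expect the one genuinely delicate step to be controlling the possible cancellation in these permanents; the parity observation (all admissible choices carry the same sign) is precisely what rescues it, and the only subtle design decision is realising that for $H$ the ``extra'' $uv$-factor must be absorbed by squaring a $b$-variable rather than by spending the new variable $c$. The remaining ingredients — reading off the new linear factors from the chosen orientation, and the degree/homogeneity argument isolating a single product $\mu\cdot(\text{new monomial})$ — are routine.
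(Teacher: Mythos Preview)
Your decomposition $P_{D_F}=P_D\cdot Q_F$ is false in general, and the whole argument rests on it.  The factor of $P_{D_F}$ attached to an \emph{old} arc $e=(x,y)$ is $X_y^F-X_x^F$, not $X_y^G-X_x^G$; whenever an endpoint of $e$ lies in $U$, say $y=w_i$, this factor picks up an extra $a_i+b_i$ (and similarly $-(a_j+b_j)$ if $x=w_j$).  Hence the old-edge factors of $P_{D_F}$ involve the new variables, and the ``homogeneity/only-old-variables'' bookkeeping that isolates $\mu\cdot\prod_i a_ib_i$ as $c_\mu$ times a coefficient of $Q_F$ breaks down.  Concretely, for $G=P_3$ with $U$ the middle vertex and $p=1$, your formula predicts coefficient $c_\mu\cdot(-1)=1$, whereas a direct expansion gives $9$: the discrepancy comes exactly from selections in which an old factor contributes $a_1$ or $b_1$ while a new factor contributes an old variable via $X_{w_1}$.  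Your parity observation (``each $a_k$ is taken with coefficient $-1$'') is correct \emph{inside} $Q_F$, but it does not extend to the old factors, where $a_k$ and $b_k$ both carry the \emph{same} sign $\pm 1$ according as $w_k$ is the head or tail of the old arc; so there is no global sign argument forcing all contributions to $\mu\prod_i a_ib_i$ to line up.  The $H$-case inherits the same defect.

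The paper's proof (given for the more general Lemma that replaces ``$\mind(G)\le 2$'' by ``$\mind(G)$ finite'') circumvents this by working with the matrix $A_{D_F}$ rather than the polynomial directly.  The crucial structural fact you are not using is that the columns of $A_{D_F}$ indexed by $uw_i$ and by $vw_i$ are \emph{identical on all rows indexed by old edges} (both edges meet $V(G)$ only at $w_i$).  Taking the difference of these two columns produces a vector $M_i$ that vanishes on the old-edge rows; placing two copies of each $M_i$ alongside the old columns chosen by $\mu$ yields a block upper-triangular matrix
\[
\begin{pmatrix} R & * \\ 0 & A_{D'}(K) \end{pmatrix},
\]
whose permanent factors as $\per(R)\cdot\per(A_{D'}(K))$.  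One then checks $\per(R)\neq 0$ (this is where your sign/parity idea reappears, now applied to the right object) and invokes Lemma~\ref{lincomb}.  If you want to rescue the polynomial viewpoint, the change of variables $a_i\mapsto a_i+b_i$, $b_i\mapsto a_i-b_i$ achieves the same effect: the old factors then depend only on the ``sum'' variables and the argument can be made to go through --- but as written, the proof has a genuine gap at the factorisation step.
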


As a consequence, the following graph classes have low monomial index and hence small values of $\chSe(G)$:

\begin{cor}[Bartnicki, Grytczuk, Niwczyk \cite{BGN09}]\label{twincor}
If $G$ is a complete graph, a complete bipartite graph, or tree, then $\mind(G) \leq 2$ and hence  $\chSe(G) \leq 3$.
\end{cor}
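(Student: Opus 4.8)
The goal is to show that complete graphs, complete bipartite graphs, and trees all have $\mind(G) \leq 2$, from which $\chSe(G) \leq 3$ follows directly via Lemma~\ref{mind}(1). The plan is to obtain each class as the output of the twin-adding construction of Theorem~\ref{addtwins}, starting from a small base graph of known low monomial index and building up. The main work is identifying, in each case, the right inductive presentation of the graph so that at every step we are in a position to invoke Theorem~\ref{addtwins} with some nonempty $U \subseteq V(G)$.

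First I would handle trees. Every tree on at least two vertices can be built up one leaf at a time, but Theorem~\ref{addtwins} adds \emph{two} vertices at once; the trick is to add a pendant vertex $u$ together with a ``dummy'' twin $v$ attached to the same single vertex, obtaining a graph $F$ that contains the desired tree plus one extra leaf, and then note that deleting a pendant vertex cannot increase $\mind$ (one should first record this monotonicity remark, or instead observe that $F$ itself has a spanning subgraph structure letting us drop $v$ — in the write-up I would simply build the tree together with one spare leaf and delete it at the end, citing monotonicity of $\mind$ under edge/vertex deletion, or alternatively build the tree on $n$ vertices directly from the tree on $n-2$ vertices by adding two leaves to a common neighbour or to two distinct existing vertices, handling the parity separately with $K_1$ or $K_2$ as the base). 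The base case is $K_1$ or $K_2$, both of which have $\mind \leq 2$ trivially.

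Next, complete bipartite graphs $K_{s,t}$: take the base graph $K_{1,1} = K_2$ (or $K_{2,2}$), and repeatedly add two vertices $u,v$ joined to \emph{all} vertices currently on one side, with $U$ equal to that entire side; this grows the opposite side by two vertices at a time, and choosing $U$ alternately on each side lets us reach any $K_{s,t}$, again absorbing parity by allowing a final vertex-deletion from $K_{s,t+1}$ or $K_{s+1,t}$. For the complete graph $K_n$, the natural move is to use the ``$H$'' form of Theorem~\ref{addtwins}: take $U = V(K_{n-2})$, add $u$ and $v$ joined to everything in $U$, and then also join $u$ to $v$; the result is exactly $K_n$, so $K_n$ is obtained from $K_{n-2}$ in one step, with base cases $K_1$ and $K_2$ covering both parities.

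The step I expect to be the main obstacle is the bookkeeping around parity and the ``extra leaf/vertex then delete'' maneuver for trees and for the bipartite case, since Theorem~\ref{addtwins} is rigidly a two-vertices-at-a-time operation while these families naturally grow one vertex at a time; making this rigorous requires either a clean monotonicity statement for $\mind$ under deletion (which follows from the fact that setting a variable to a generic value in a product of linear forms, or restricting to a subgraph's factors, cannot raise the minimum over monomials of the largest exponent — this should be stated and proved, or cited) or a careful parity-based induction. Once that is pinned down, each of the three cases is a short explicit induction, and the final implication $\mind(G)\le 2 \Rightarrow \chSe(G)\le 3$ is immediate from Lemma~\ref{mind}.
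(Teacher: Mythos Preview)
The paper does not give its own proof of this corollary; it is stated as a result of Bartnicki, Grytczuk and Niwczyk and presented as a consequence of Theorem~\ref{addtwins}. Your plan to derive each class by iterating Theorem~\ref{addtwins} is exactly the intended route, and your treatment of $K_n$ via the ``$H$'' variant (take $U=V(K_{n-2})$, add two mutually adjacent universal vertices) and of $K_{s,t}$ via the ``$F$'' variant (add two new vertices to one side at a time) is correct.

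There are, however, two genuine gaps. First, $K_2$ is not a valid base: $P_D\equiv 0$ for $K_2$, so $\mind(K_2)$ is not finite and Theorem~\ref{addtwins} does not apply. This means even-$n$ complete graphs cannot be reached from $K_2$; you must start the even chain from $K_4$, verified directly (or handle it some other way). The same caution applies to $K_{1,1}$ in the bipartite case; using $K_{1,2}$, $K_{1,3}$, and $K_{2,2}$ as bases covers all parities without trouble.

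Second, and more seriously, your tree argument hinges on ``monotonicity of $\mind$ under leaf deletion,'' which is neither stated nor proved anywhere in the paper, and is not obvious from either the polynomial or the permanent-index description (deleting a leaf removes a row \emph{and} a column of $A_D$, and the column you lose may have been essential to a low-$\pind$ witness). You flag this yourself, but you do not supply a proof, so as written the tree case is incomplete. A cleaner route that avoids monotonicity entirely: if $T$ is not a path, take a longest path with endpoint $x$ and let $x'$ be its neighbour; maximality forces every other neighbour of $x'$ to be a leaf, so $x$ has a sibling leaf and Theorem~\ref{addtwins} applies directly to reduce to $T$ minus two twin leaves. Paths $P_n$ are then handled by the same one-line variable-count used for cycles in Proposition~\ref{cyclemind}: each $x_i$ appears in at most two linear factors of $P_D$, so $\mind(P_n)\le 2$. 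With these adjustments your argument goes through.
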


It can also be easily shown that the same bound holds for cycles.

\begin{prop}\label{cyclemind}
If $G = C_n$, then $\mind(G) \leq 2$ and hence $\chSe(G) \leq 3$.
\end{prop}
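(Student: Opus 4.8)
The plan is to compute $\mind(C_n)$ directly via the permanent method, i.e.\ via Lemma~\ref{mindtopind}(1): since $C_n$ is nice, it suffices to fix an orientation $D$ of $C_n$ and show $\pind(A_D) \leq 2$. I would choose $D$ to be the \emph{consistently oriented} cycle $v_1 \to v_2 \to \cdots \to v_n \to v_1$, with $e_i$ the arc from $v_i$ to $v_{i+1}$ (indices mod $n$). Then the row of $A_D$ corresponding to $e_i$ records the coefficients of the linear form $X_{v_{i+1}} - X_{v_i}$ in the variables $x_1,\dots,x_n$. Because each vertex of $C_n$ has exactly two incident edges, $X_{v_{i+1}} = x_i + x_{i+1}$ and $X_{v_i} = x_{i-1} + x_i$, so the $i$th row of $A_D$ has a $+1$ in column $i+1$, a $-1$ in column $i-1$, and $0$ elsewhere (the $x_i$ terms cancel). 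Thus $A_D$ is, up to permutation of columns, the circulant-type matrix with generic row $(\dots, -1, 0, 1, \dots)$.

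The next step is to invoke Lemma~\ref{lincomb} with $A = A_D$ and a cleverly chosen $m \times m$ matrix $L$ whose columns are linear combinations of columns of $A_D$, with $\per L \neq 0$ and each column of $A_D$ used in at most two columns of $L$. The natural candidate is to take partial sums: let $c_1,\dots,c_n$ be the columns of $A_D$ (so $c_j$ has entries $+1$ in row $j-1$ and $-1$ in row $j+1$), and set the $j$th column of $L$ to be $\ell_j := c_j + c_{j+1} + \cdots$ summed over an appropriate arc, or more simply $\ell_j := c_{j} + c_{j+1}$ or a telescoping combination designed so that $L$ becomes (a permutation of) an identity-like or triangular matrix with a single extra entry. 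Telescoping is promising: summing $c_2 + c_3 + \cdots + c_{j}$ in row $i$ gives $+1$ if $i+1 \in \{2,\dots,j\}$ minus $1$ if $i-1 \in \{2,\dots,j\}$, which collapses to an almost-identity pattern. One then checks that with this choice each original column $c_j$ appears with nonzero coefficient in at most two columns of $L$ (the partial sums form a ``staircase,'' so $c_j$ enters consecutive blocks), giving $\pind(A_D) \leq 2$ and hence $\mind(C_n) \leq 2$; Lemma~\ref{mind}(1) then yields $\chSe(C_n) \leq 3$.

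An alternative, possibly cleaner route avoids Lemma~\ref{lincomb} entirely: show directly that $A_D^{(2)}$ has permanent rank $n$, i.e.\ exhibit $n$ columns of $[A_D\,|\,A_D]$, each original column used at most twice, forming a square matrix with nonzero permanent. Equivalently, argue via the interpretation that $\per$ of such a submatrix counts (signed) certain subgraph structures; for the cycle this reduces to an easy parity/counting argument. Yet another option is to appeal to Theorem~\ref{addtwins} or Corollary~\ref{twincor} style reasoning — but $C_n$ is not obtained by the twin-construction from a smaller low-mind graph in an obvious way, so a direct argument is cleaner.

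The main obstacle I anticipate is handling the small cases and the parity of $n$: for even $n$, a consistently oriented even cycle has a singular vertex–sum structure ($C_n$ is bipartite, adjacent vertices can have equal degree), and one must verify that the constructed matrix $L$ genuinely has nonzero permanent rather than accidentally vanishing due to cancellation — permanents do not have the convenient multilinear-algebra tools that determinants enjoy, so the nonvanishing must be checked by an explicit expansion or a combinatorial count of the surviving permutation terms. I would expect to treat $C_3$ (a complete graph, already covered by Corollary~\ref{twincor}) and perhaps $C_4$ separately, and then give a uniform construction for $n \geq 4$ (or $n \geq 5$), verifying the $\per L \neq 0$ claim by identifying exactly which terms in the permanent expansion are nonzero and showing they do not all cancel.
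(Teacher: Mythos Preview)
Your overall setup is fine --- the consistently oriented cycle is the right orientation, and your computation that the $i$th factor of $P_D$ is $x_{i+1} - x_{i-1}$ is correct. But you then take a long detour through Lemma~\ref{lincomb}, partial-sum constructions, parity worries, and small-case analysis, none of which is needed. The paper's proof is a single observation you have already essentially made but not exploited: once you know
\[
P_D = (x_2 - x_n)(x_3 - x_1)(x_4 - x_2)\cdots(x_n - x_{n-2})(x_1 - x_{n-1}),
\]
each variable $x_j$ appears in \emph{exactly two} of the factors (the factors for the arcs $e_{j-1}$ and $e_{j+1}$). Therefore, in any monomial of the full expansion of $P_D$, the exponent of $x_j$ is at most $2$, so $\mind(G) \leq 2$ by the very definition of monomial index. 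Lemma~\ref{mind}(1) then gives $\chSe(G) \leq 3$.

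In other words, you never need to pass to $A_D$, compute a permanent, build an auxiliary matrix $L$, or split into cases by parity of $n$. The ``obstacle'' you anticipate --- verifying $\per L \neq 0$ without cancellation --- simply does not arise, because the bound on $\mind$ follows from the trivial degree bound on a product of linear forms: if $x_j$ occurs in at most $k$ of the factors, then its exponent in any monomial of the product is at most $k$. Your permanent-method machinery is correct in principle and would eventually yield the same bound, but it is working much harder than the problem requires.
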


\begin{proof}
Let $V(G) = \{v_1, v_2, \ldots, v_n\}$ and $E(G) = \{v_1v_2, v_2v_3, \ldots, v_{n-1}v_n, v_nv_1\}$.  Let $D$ be the orientation of $G$ with $A(D) = \{(v_1,v_2), (v_2,v_3), \ldots, (v_{n-1},v_n), (v_n,v_1)\}$.  Consider the colouring polynomial 
\[
P_D = (x_2 - x_n)(x_3 - x_1)(x_4 - x_2)\cdots(x_{n} - x_{n-2})(x_{1} - x_{n-1}).
\]
Since each variable appears in exactly two factors of $P_D$, no exponent in the expansion of $P_D$ exceeds $2$, and hence $\mind(G) \leq 2$.
\end{proof}

In order to prove our major results in Section \ref{ch:alon:edge}, the following generalization of Theorem \ref{addtwins} is required:

\begin{lem}\label{twins}
Let $G$ be a graph with finite monomial index $\mind(G) \geq 1$.  Let $U$ be a nonempty subset of $V(G)$.  If $F$ is a graph obtained by adding two new vertices $u,v$ to $V(G)$ and joining them to each vertex of $U$, and $F^*$ is a graph obtained from $F$ by joining $u$ and $v$, then $\mind(F), \mind(F^*) \leq \max\{2, \mind(G)\}$.
\end{lem}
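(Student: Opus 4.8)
The plan is to pass to the permanent formulation and exhibit, for a suitable orientation $D'$ of $F$ (resp.\ $F^*$), a square submatrix of $A_{D'}^{(N)}$ with $|E(F)|$ rows and nonzero permanent, where $N=\max\{2,\mind(G)\}$; by Lemma~\ref{mindtopind}(1) this gives $\mind(F)\le N$ (and likewise $\mind(F^*)\le N$). Note first that finiteness of $\mind(G)$ forces $G$ to be nice — a $K_2$-component would make $P_D\equiv 0$ — and then $F$ and $F^*$ are nice as well, since each of $u,v$ has a neighbour in $U$ while every vertex of $U$ has degree at least $2$. Fix an orientation $D$ of $G$, write $k:=\mind(G)=\pind(A_D)\ge 1$, and fix nonnegative integers $c_e$ $(e\in E(G))$, each at most $k$ and summing to $|E(G)|$, such that the matrix $L_0$ obtained by taking column $e$ of $A_D$ with multiplicity $c_e$ has $\per L_0\ne 0$ (such $c_e$ exist by the definition of $\pind$). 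Extend $D$ to $D'$ by directing every edge at $u$ or $v$ away from $u$ and $v$, and, for $F^*$, directing $uv$ from $u$ to $v$.

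For $F$: let $B$ be the square matrix obtained from $A_{D'}$ by taking each column indexed by an edge of $G$ with multiplicity $c_e$ and each column indexed by a new edge once. Every column of $A_{D'}$ is then used at most $\max\{k,1\}=k$ times, so it suffices to prove $\per B\ne 0$; this will in fact yield the stronger bound $\mind(F)\le\mind(G)$. Ordering rows and columns as (edges of $G$, then new edges), $B=\left(\begin{smallmatrix}L_0 & P\\ R & C\end{smallmatrix}\right)$, where the new-by-new block $C$ is the explicit matrix $\left(\begin{smallmatrix}I-J & I\\ I & I-J\end{smallmatrix}\right)$, with $I,J$ the identity and all-ones matrices of order $|U|$. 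The key structural fact is that, $u$ and $v$ having the common $G$-neighbourhood $U$, for each $w\in U$ the columns of $A_{D'}$ indexed by $uw$ and by $vw$ coincide on every row indexed by an edge of $G$ (each ``sees'' only the vertex $w$ of $G$); equivalently, the two corresponding columns of $P$ are equal and their difference in $A_{D'}$ is supported on the new rows. Expanding $\per B$ by the (sign-free) Laplace rule for permanents along the $2|U|$ new rows, the term matching the new rows to the new columns is exactly $\per L_0\cdot\per C$, and one checks $\per C>0$ — every permutation contributing to $\per C$ picks an even number of the off-diagonal $(-1)$-entries, hence contributes $+1$, and the permutation swapping the two blocks is one such. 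One then shows, using the twin relations above, that the remaining terms of the expansion do not cancel this leading term, so $\per B\ne 0$.

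For $F^*$: the same scheme applies, except that taking the column indexed by $uv$ only once may fail — the corresponding $(2|U|+1)\times(2|U|+1)$ block $\left(\begin{smallmatrix}I-J & I & -\mathbf{1}\\ I & I-J & -\mathbf{1}\\ -\mathbf{1}^{T} & \mathbf{1}^{T} & 0\end{smallmatrix}\right)$ has permanent $0$ (already when $|U|=1$). This is precisely where the bound weakens: one instead allows the column indexed by $uv$ to be taken twice (and omits one column $uw$ or $vw$), so that every column of $A_{D'}$ is used at most $\max\{k,2\}$ times, and shows that the resulting $B$ again has nonzero permanent. That the constant $2$ cannot be removed is witnessed by $G=P_3$: there $\mind(G)=1$, while a short computation gives $\mind(F^*)=2$.

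The main obstacle is establishing $\per B\ne 0$. Since neither cross-block $P$ nor $R$ vanishes in general, $B$ is not block-triangular and $\per B$ does not simply factor as $\per L_0\cdot\per C$; one must therefore analyse the mixed terms of the Laplace expansion and rule out cancellation, the essential tool being the twin identity (columns $uw$ and $vw$ of $A_{D'}$ agree on the $G$-rows) together with the explicit form and positivity of $\per C$. Carrying out this cancellation analysis — and, for $F^*$, selecting the right doubled and omitted columns — is the only substantive part; verifying the block structure of $A_{D'}$ under the chosen orientations and deducing the stated bounds via Lemma~\ref{mindtopind} is routine.
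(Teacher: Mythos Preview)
Your setup is fine and you correctly spot the crucial twin relation (columns indexed by $uw_j$ and $vw_j$ agree on all rows indexed by $E(G)$), but the proposal stops exactly where the proof should begin: you assert that the mixed terms in the Laplace expansion of $\per B$ do not cancel the ``leading'' term $\per L_0\cdot\per C$, and you explicitly flag ``carrying out this cancellation analysis'' as the only substantive part --- and then do not carry it out. Since neither $P$ nor $R$ vanishes, $\per B$ genuinely involves many cross terms, and the twin relation in the form you use it (equality of columns on the $G$-rows) does not by itself control signs or prevent cancellation. As stated, the proposal is a plan with its one nontrivial step missing; in particular the claimed strengthening $\mind(F)\le \mind(G)$ for $F$ is unsupported.

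The paper exploits the same twin relation, but in a way that removes the cancellation problem entirely: it invokes Lemma~\ref{lincomb}, which allows the test matrix to be built from \emph{linear combinations} of columns of $A_{D'}$ rather than columns with multiplicity. Taking the difference $M_j:=(\text{col }uw_j)-(\text{col }vw_j)$, the twin relation makes $M_j$ vanish on every row indexed by $E(G)$. Placing two copies of each $M_j$ next to the inherited block $A_D(K)$ yields a matrix
\[
M=\begin{pmatrix} R & X(K)\\ 0 & A_D(K)\end{pmatrix}
\]
that is block upper-triangular, with $R$ a $2|U|\times 2|U|$ matrix whose rows are constant $\pm 1$; hence $\per M=\per R\cdot\per A_D(K)\ne 0$ with no cancellation analysis at all. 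The cost is that each new-edge column appears in two columns of $M$ (because each $M_j$ is used twice), which is exactly why the bound comes out as $\max\{2,\mind(G)\}$ via Lemma~\ref{lincomb}. For $F^*$ the same idea works after placing two copies of the $uv$-column alongside a single copy of $M_1$. So the missing idea in your plan is not a delicate expansion argument but the subtraction trick that makes the matrix triangular.
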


The proof which follows is an adaptation of the proof of Theorem \ref{addtwins} in \cite{BGN09}.  Given a matrix $A$ with columns $a_1, a_2, \ldots, a_n$ and a sequence of (not necessarily distinct) column indices $K = (i_1, i_2, \ldots, i_k)$, $A(K)$ is defined to be the matrix $A(K) = \left( a_{i_1} \,\, a_{i_2}  \,\, \cdots \,\, a_{i_k} \right).$

\begin{proof}
Let $U = \{u_1, \ldots, u_k\}$ be the subset of $V(G)$ stated in the theorem.  Let $E_u = \{e_1, e_3, \ldots, e_{2k-1}\}$ and $E_v =  \{e_2, e_4, \ldots, e_{2k}\}$ be the sets of edges incident to the vertices $u$ and $v$, respectively.  Assume that these edges are oriented toward $U$, and that for each $i = 1, 2, \ldots, k$ the edges $e_{2i-1}$ and $e_{2i}$ have the same head.

Let $D$ be an orientation of $F$, $D'$ the induced orientation of $G$, and consider the matrices $A_D$ and $A_{D'}$.
Let $A_1, \ldots, A_{2k}$ be the first $2k$ columns of $A_D$, corresponding to $\{e_1, e_2, \ldots, e_{2k}\}$.  If we write $A = \left(A_1\,\, \cdots \,\, A_{2k}\right)$, then $A_D = \left( A \,\, B \right)$ where $B = \left(\begin{smallmatrix} X \\ A_{D'} \end{smallmatrix} \right)$.  

Let $Y$ be the $(2k) \times (2k)$ matrix and $Z$ the $(|E(F)| - 2k) \times (2k)$ matrix such that $A = \left[\begin{smallmatrix} Y \\ Z \end{smallmatrix} \right]$.
Since the edges $e_{2i-1}$ and $e_{2i}$ have the same head for each $i = 1, 2, \ldots, k$, $A_{2i-1}$ and $A_{2i}$ agree on $Z$.  Furthermore, $Y$ may be written as a block matrix, where $\left(\begin{smallmatrix} 0 & 1 \\ 1 & 0 \end{smallmatrix} \right)$ occupies the diagonals and $\left(\begin{smallmatrix} -1 & 0 \\ 0 & -1 \end{smallmatrix} \right)$ is everywhere else, as seen in Figure \ref{Y}.

\begin{figure}[h!]
\begin{center}
\[Y = \begin{pmatrix}
0 & 1 & -1 & 0 & & -1 & 0 \\
1 & 0 & 0 & -1 & \cdots & 0 & -1 \\
-1 & 0 & 0 & 1 & & -1 & 0 \\
0 & -1 & 1 & 0 & & 0 & -1 \\
& \vdots & & & \ddots & & \\
-1 & 0 & -1 & 0 & & 0 & 1 \\
0 & -1 & 0 & -1 & & 1 & 0 \\
\end{pmatrix}.\]
\end{center}
\caption{The block matrix $Y$}\label{Y}
\end{figure}

There exists a matrix of columns from $A_{D'}$, with no column used more than $\mind(G)$ times, with nonzero permanent.  Let $K$ denote the sequence of edges of $G$ which index this matrix.  Consider a new matrix 
$$M = \left(M_1\,\, M_1  \,\,M_2\,\, M_2\,\,  \cdots \,\,M_k \,\,M_k \,\,B(K)\right),$$ 
where $M_j = A_{2j-1} - A_{2j}$ for $j = 1, 2, \ldots, k$.

The properties of the columns of $A$ outlined above imply that the matrix $M$ can be written as follows:
$M = \left( \begin{smallmatrix}
R & X(K) \\
0 & A_{D'}(K) 
\end{smallmatrix} \right)$, 
where $R$ has all constant rows:
$$R = \begin{pmatrix}
-1 & -1 & -1 & & -1 \\
1 & 1 & 1 & \cdots & 1 \\
-1 & -1 & -1 & & -1 \\
& \vdots & & \ddots & \\
1 & 1 & 1 & & 1 \\
\end{pmatrix}.$$

Since $\per{M} = \per{R} \times \per{A_{D'}(K)}$, each of $\per{R}$ and $ \per{A_G(K)}$ are nonzero, and any column of $A$ appears in the linear combination of at most 2 columns of $M$, Lemma \ref{lincomb} implies that $\mind(F) \leq \max\{2, \mind(G)\}$.

We now consider $F^*$.
Let $H$ be an orientation of $F^*$ with $e_0 = uv$ oriented from $v$ to $u$.
The matrix $A_H$ is precisely $A_D$ with a row and column added for $e_0$ (say, as the first row and column).  It can be depicted in block form
$A_H = \left(\begin{smallmatrix} Y' & X' \\
Z' & A_G  \end{smallmatrix} \right)$,
where $Y'$ and $Z'$ are the matrices depicted in Figure \ref{Y'Z'} on page \pageref{Y'Z'}.

\begin{figure}[h!]
\begin{center}
\[Y' = \begin{pmatrix}
0 & 1 & -1 & \cdots & 1 & -1 \\
-1 &&&&& \\
-1 &&&&& \\
\vdots &&&Y&& \\
-1 &&&&& \\
-1 &&&&& \\
\end{pmatrix}, \,\,\,\,
Z' = \begin{pmatrix}
0 & \\
0& \\
\vdots& Z \\
0& \\
0& \\
\end{pmatrix}\]
\end{center}
\caption{The matrices $Y'$ and $Z'$}\label{Y'Z'}
\end{figure}

Let $A_0, A_1, \ldots, A_{2k}$ denote the first $2k+1$ columns of $A_H$, corresponding to the edges $e_0, e_1, \ldots, e_{2k}$.  Form a new matrix
$$N = \left(N_0\,\, N_0\,\, N_1  \,\,N_2\,\, N_2\,\,  \cdots \,\,N_k \,\,N_k \,\,B(K)\right),$$
so that $N_0 = A_0$ and $N_j = A_{2j-1} - A_{2j}$ for $j = 1, 2, \ldots, k$.  Arguing as before, 
$N = \left(\begin{smallmatrix}
R' & X'(K) \\
0 & A_G(K) \\
\end{smallmatrix}\right)$,
where $R'$ is the following square matrix:
$$R' = \begin{pmatrix}
0 & 0 & 2 & 2 & & 2 \\
-1 & -1 & -1 & -1 & \cdots & -1 \\
-1 & -1 & 1 & 1 & & 1 \\
& \vdots & \vdots & & \ddots & \\
-1 & -1 & 1 & 1 & & 1 \\
\end{pmatrix}.$$
It is shown in \cite{BGN09} that $\per{R'} \neq 0$.  Hence $\per{N} = \per{R'}\times\per{A_G(K)} \neq 0$, and since any column of $A$ appears in the linear combination of at most 2 columns of $N$, Lemma \ref{lincomb} implies thats $\mind(H) \leq \max\{2, \mind(G)\}$.
\end{proof}

\section{General bounds for edge list-weightings and total list-weightings}\label{ch:alon:edge}

Armed with the Combinatorial Nullstellensatz and the permanent method, we may now proceed with our main theorems.

Recall that a graph $G$ is {\em $d$-degenerate} if every induced subgraph of $G$ has a vertex of degree at most $d$.  If $G$ and $H$ are graphs, we write $H \leq_i G$ to denote that $H$ is an induced subgraph of $G$.  
The degeneracy of a graph $G$, denoted $\partial(G)$, is the smallest $d$ for which $G$ is $d$-degenerate; that is $\partial(G) = \max\{\delta(H) | H \leq_i G\}$.
We extend the notion of degeneracy to pairs of vertices at a given distance.  
Let $\delta_t(G)$ denote the minimum value of $d(u) + d(v)$ for two vertices $u,v \in V(G)$ at distance exactly $t$.
The {\em $t$-degeneracy} of a graph $G$, denoted $\partial_t(G)$, is
\[
\partial_t(G) = \max\{\delta_t(H) | H \leq_i G\}.
\]
We say that $G$ is {\em $(t,d)$-degenerate} for each integer $d \geq \partial_t(G)$.  If no induced subgraph of $G$ has vertices at distance exactly $t$ (for example, $G = K_n$ and $t \geq 2$), then we adopt the convention that $\partial_t(G) = 2\Delta(G)$.

We now show that $\chSe(G)$, $\chPe(G)$, and $\chSt(G)$ can be bounded by functions of $\partial_2(G)$.  
As a corollary, each parameter can in turn be bounded in terms of the maximum degree and degeneracy of $G$.  
The results are achieved by carefully orienting the edges of a graph and applying the lemmas from the previous sections to show that our desired matrix has non-zero permanent.





%





\begin{thm}\label{edgemain}
If $G$ is a nice graph on at least $3$ vertices, then $\mind(G) \leq \partial_2(G)$ and hence $\chSe(G) \leq \partial_2(G) + 1$
\end{thm}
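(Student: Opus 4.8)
\emph{What remains to be proved} is the inequality $\mind(G)\le\partial_2(G)$; the bound $\chSe(G)\le\partial_2(G)+1$ then follows at once from part~(1) of Lemma~\ref{mind}. I would argue by induction on $|V(G)|$, modelled on the vertex‑deletion argument behind Lemma~\ref{twins}, working throughout with $\pind(A_D)$ in place of $\mind(G)$ via Lemma~\ref{mindtopind}(1) and using Lemma~\ref{lincomb} to certify permanent indices. \textbf{Reductions and base cases.} By Proposition~\ref{componentbound} it suffices to treat connected $G$: an isolated vertex has $\mind=0$, no component is a $K_2$ by niceness, and a clique component has $\mind\le2$ by Corollary~\ref{twincor}, each of which is at most $\partial_2(G)$. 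A connected graph of diameter at most $1$ is a clique, handled by Corollary~\ref{twincor} together with the convention $\partial_2(G)=2\Delta(G)$; this also disposes of every connected $G$ none of whose induced subgraphs has a pair at distance exactly $2$. Trees and cycles are handled by Corollary~\ref{twincor} and Proposition~\ref{cyclemind}, and graphs of order at most $3$ directly. Hence I may assume $G$ is connected, nice, of order at least $4$ and of diameter at least~$2$; then $G$ itself has a pair at distance exactly $2$, so $\delta_2(G)\le\partial_2(G)=:d$, and I fix $u,v\in V(G)$ with $d_G(u,v)=2$ and $d_G(u)+d_G(v)\le d$, together with a common neighbour $w$.

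\textbf{The inductive step.} Set $G_0:=G-\{u,v\}$. If some induced subgraph of $G_0$ has a pair at distance exactly $2$ then $\partial_2(G_0)\le\partial_2(G)=d$; otherwise $G_0$ is a disjoint union of cliques and $\mind(G_0)\le2\le d$. Assuming, as one generically may (the remaining configurations confine $G$ to a small, explicitly describable family, to be checked by hand), that $G_0$ is nice with $|V(G_0)|\ge3$, the inductive hypothesis gives $\mind(G_0)\le d$; by Lemma~\ref{mindtopind}(1) I may then fix an orientation $D_0$ of $G_0$ and, by definition of $\pind$, a sequence $K$ of columns of $A_{D_0}$ in which no column is repeated more than $d$ times and with $\per A_{D_0}(K)\ne0$. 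Extend $D_0$ to an orientation $D$ of $G$ in which $u$ and $v$ are sinks, and list the $d_G(u)+d_G(v)\le d$ edges incident to $u$ or $v$ first, so that
\[
A_D=\begin{pmatrix} Y & X \\ Z & A_{D_0}\end{pmatrix}.
\]
A short computation identifies the blocks: $Y$ is made of two ``derangement'' blocks $J-I$, one per new vertex, coupled only in the rows and columns corresponding to $N(u)\cap N(v)$; each row of $X$ is, up to sign, supported exactly on the edges of $G_0$ through a single vertex of $N(u)\cup N(v)$; and each column of $Z$ is a column of the signed incidence matrix $B_{D_0}$, namely $\beta^{G_0}_a$ for the appropriate $a\in N(u)\cup N(v)$.

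\textbf{Assembling the witness.} The goal is a matrix $L$ certifying $\pind(A_D)\le d$ through Lemma~\ref{lincomb}: an $m\times m$ matrix whose columns are linear combinations of columns of $A_D$, with $\per L\ne0$ and with no column of $A_D$ used more than $d$ times. Following the proof of Lemma~\ref{twins}, one would like $L=\left(\begin{smallmatrix} R & X(K) \\ 0 & A_{D_0}(K)\end{smallmatrix}\right)$, so that $\per L=\per R\cdot\per A_{D_0}(K)$ is nonzero as soon as $\per R\ne0$; the point of the common neighbour $w$ and of the niceness of $G_0$ is that, exactly as $A_{2i-1}-A_{2i}$ behaves in that proof, the difference of the two columns of $A_D$ at a common neighbour of $u$ and $v$ vanishes on every $E(G_0)$‑row, and hence is an admissible column of the first block.

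\textbf{The obstacle.} What is not reached by common neighbours is the crux. Since $u$ and $v$ are not twins, each column $A_e$ for an edge $e$ at $u$ or $v$ running to a vertex of $N(u)\,\triangle\,N(v)$ carries genuine content on the $E(G_0)$‑rows (its restriction there is some $\beta^{G_0}_a\ne0$), and no pairing cancels it; to keep $L$ block lower‑triangular one is therefore forced to recruit columns $A_g$ with $g\in E(G_0)$ into the combinations forming the first block. The delicate point is to do this so that simultaneously (i) the first block still annihilates the $E(G_0)$‑rows, (ii) the resulting $(d_G(u)+d_G(v))\times(d_G(u)+d_G(v))$ matrix $R$ has nonzero permanent, and (iii) after also counting the repetitions already present in $K$, no column of $A_D$ is used more than $d$ times. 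I expect this to require choosing $D_0$ and $K$ with foresight — so that the relevant vectors $\beta^{G_0}_a$ lie in the span of only a few columns of $A_{D_0}(K)$ — together with a short case analysis on $N(u)\cap N(v)$ and on how $N(u)\,\triangle\,N(v)$ meets the rest of $G_0$; the hypothesis $d_G(u)+d_G(v)\le d$ is precisely what keeps the number of problematic new columns small enough for (iii) to be feasible. Once such an $L$ is produced, Lemma~\ref{lincomb} gives $\pind(A_D)\le d$, that is $\mind(G)\le\partial_2(G)$, closing the induction.
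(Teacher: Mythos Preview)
Your setup and reductions are fine, and you have correctly identified where the difficulty lies; but the ``obstacle'' you describe is not a loose end to be tidied up later --- it is exactly the step that carries the proof, and your proposal does not contain the idea that resolves it.  Imitating Lemma~\ref{twins} by taking differences of columns at \emph{common} neighbours of $u$ and $v$ cannot work here, because $u$ and $v$ are generically not twins: the columns attached to edges through $N(u)\,\triangle\,N(v)$ have nonzero $E(G_0)$--support, and there is no pairing to cancel it.  Your suggestion of ``recruiting'' further columns from $A_{D_0}$ to kill that support while keeping $\per R\neq 0$ and controlling multiplicities is not a plan; it is a wish, and I do not see how to make it work within the budget $d$.

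The trick you are missing is to stop thinking about the columns for the edges from $u$ and $v$ to their neighbours and to look instead at the two \emph{path} edges through the common neighbour $w$.  With the orientation in which all edges at $u$ point into $u$, all edges at $v$ point out of $v$, and all other edges at $w$ point into $w$, the single column
\[
c \;=\; c_{uw} - c_{wv}
\]
has entry $1$ in every row indexed by an edge incident with $u$ or with $v$ (including the rows $uw$ and $wv$ themselves) and entry $0$ in every other row --- in particular on all of $E(G_0)$.  Repeating $c$ exactly $d_G(u)+d_G(v)$ times produces the all--ones block $J_{d_G(u)+d_G(v)}$ in the top--left, with a genuine $0$ block below it; only the two columns $c_{uw},c_{wv}$ are used, each $d_G(u)+d_G(v)\le d$ times, so condition~(iii) is automatic.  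The block triangularisation $\per L = (d_G(u)+d_G(v))!\cdot \per L_{G_0}$ then gives $\pind(A_D)\le d$ via Lemma~\ref{lincomb}.  A second, smaller, point: the possibility that $G_0=G-\{u,v\}$ is not nice cannot be dismissed as ``a small family to be checked by hand''.  One must separate the case where the $K_2$ component of $G_0$ contains $w$ (which forces a pair of twins in $G$ and is handled by Lemma~\ref{twins}) from the case where the $K_2$ components avoid $w$; in the latter one inserts, for each such $K_2$, a single extra column of $A_D$ that contributes a $\pm 1$ diagonal block between $J$ and $L_{G_0}$, without increasing any multiplicity beyond~$d$.
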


\begin{proof}
We need only prove that $\mind(G) \leq \partial_2(G)$; the rest of the theorem follows from Lemma \ref{mind}.
If $G$ is a tree, cycle, or complete graph, then $\mind(G) \leq 2$ by Corollary \ref{twincor} and Proposition \ref{cyclemind}, and hence the theorem holds for the following graphs: $P_3$, $K_3$, $P_4$, $K_{1,3}$, $C_4$, and $K_4$.  If $G$ is isomorphic to $K_3$ with a leaf or $C_4$ with a chord, then one may check that the theorem holds for $G$ by straightforward computation of the associated colouring polynomial $P_D$ for any orientation $D$.  Hence, the theorem holds for any connected graph on $3$ or $4$ vertices.

We proceed now by induction on $|V(G)|$.  
We may assume that $G$ is connected, since Proposition \ref{componentbound} states that $\mind(G)$ is at most the largest monomial index of its components.  
Let $G$ be a connected graph on at least $5$ vertices, and for any graph $H$ with $|V(H)| < |V(G)|$, assume that $\mind(H) \leq \partial_2(H)$.  


If $G$ is a complete graph, then the theorem holds by Corollary \ref{twincor}.  Assume that $G$ is not complete.  There exist $u,v,w \in V(G)$ such that the induced subgraph $G[\{u,v,w\}]$ is a path of length $2$ (or, $uvw$ is an induced $2$-path).  
Choose this $2$-path such that $d(u)+d(w)$ is minimum (and, hence, $d(u) + d(w) \leq \partial_2(G))$.  The ultimate goal will be to apply an inductive argument to $G - \{u,w\}$, however we must concern ourselves with whether or not this subgraph of $G$ is nice.  To this end, we define the following sets of edges:
\begin{align*}
\mathcal{F} &= \textrm{ the edges of those components in $G - \{u,w\}$ isomorphic to $K_2$} \\
E_u &= \{e \in E(G) \,|\, e \ni u, e \neq uv\} \\
E_w &= \{e \in E(G) \,|\, e \ni w, e \neq vw\} \\
E_v &= \{e \in E(G) \,|\, e \ni v, e \neq uv,vw\} \\
E^* &= E(G) \setminus \left(E_u \cup E_v \cup E_w \cup\{uv, vw\}\right)
\end{align*}
The path $uvw$ and the sets of edges $E_u,E_v,E_w$ are shown in Figure \ref{path}. \\


\begin{figure}[h!]
\begin{center}
\scalebox{0.8}{
\begin{tikzpicture}
\clip(-5,-2.1) rectangle (5,2.5);
\draw[line width=2.6pt] (0,0)-- (-3,1);
\draw[line width=2.6pt] (0,0)-- (3,1);
\draw [line width=1pt] (-3.48,1.82)-- (-3,1);
\draw [line width= 1pt] (-3.98,1.16)-- (-3,1);
\draw [line width= 1pt] (-3.88,0.44)-- (-3,1);
\draw [line width= 1pt] (3.48,1.82)-- (3,1);
\draw [line width= 1pt] (3.98,1.16)-- (3,1);
\draw [line width= 1pt] (3.88,0.44)-- (3,1);
\draw [line width= 1pt] (-0.48,-1.52)-- (0,0);
\draw [line width= 1pt] (0.48,-1.58)-- (0,0);
\draw (-4.75,1.5) node[anchor=north west] {\large $E_u$};
\draw (-0.35,-1.55) node[anchor=north west] {\large $E_v$};
\draw (4,1.5) node[anchor=north west] {\large $E_w$};
\fill [color=black] (0,0) circle (4pt);
\draw[color=black] (0.05,0.45) node {\large $v$};
\fill [color=black] (-3,1) circle (4pt);
\draw[color=black] (-2.8,1.3) node {\large $u$};
\fill [color=black] (3,1) circle (4pt);
\draw[color=black] (2.8,1.3) node {\large $w$};
\end{tikzpicture}
}
\end{center}
\caption{The induced $2$-path $uvw$ in $G$}\label{path}
\end{figure}

\noindent{\underline{Case 1: $E_v \cap \mathcal{F} \neq \emptyset$}}

If $E_v \cap \mathcal{F} \neq \emptyset$, then there can be only one edge in this intersection, otherwise the connected component containing $v$ in $G - \{u,w\}$ would have two or more edges. This implies that, since $uv, vw \in E(G)$, we have that $N_G(v) = \{u,w,x\}$ for some vertex $x \in V(G)$.  Since $\{v,x\}$ induces a graph isomorphic to $K_2$ in $G - \{u,w\}$, we have that $N_G(x) \subseteq \{u,v,w\}$.  

If $x$ is adjacent to both $u$ and $w$, then $v$ and $x$ are adjacent twins.  Suppose that $G \setminus \{v,x\}$ is not nice; we will show that this contradicts the choice of $uvw$ which minimizes $d_G(u) = d_G(w)$.  If $G$ is not nice, then $u$, $w$, or both $u$ and $w$ are adjacent to exactly one vertex in $G$ other than $v$ and $x$; without loss of generality, suppose that $uy \in E(G)$, $y \neq v,x$.  Since $y \notin N_G(x)$, the vertices $y,u,x$ induce a $2$-path; furthermore, $d_G(y) + d_G(x) = 1+ 3 = 4$.  This contracts our choice of $uvw$, since $d(u) + d(w) \geq 3 + 2 = 5$.  Thus, $G - \{v,x\}$ is a nice graph, and so, by Lemma \ref{twins}, $\mind(G) \leq \max\{2,\mind(G - \{v,x\})\}$.  By the induction hypothesis, $\mind(G - \{v,x\}) \leq \partial_2(G - \{v,x\})$, and so
\[ 
\mind(G) \leq \max\{2,\mind(G - \{v,x\})\} \leq \max\{2,\partial_2(G - \{v,x\})\} \leq \partial_2(G).
\]

We may now assume that $x$ is not adjacent to at least one of $u$ and $w$.  If $w \notin N_G(x)$, then both $uvw$ and $xvw$ are induced $2$-paths in $G$.  By the minimality of $d(u) + d(w)$, we must have that $d(u) \leq d(x)$.  If $u$ is adjacent to $x$, then $d(x) = 2$ and, since $u$ is adjacent to $v$ as well, $d(u) = 2$ and $N_G(u) = \{v,x\}$.  Otherwise, if $u$ is not adjacent to $x$, then $d_G(u) = d_G(x) = 1$ and $N_G(u) = \{v\}$.  In either case, $u$ and $x$ are twins.  If $G -\{u,x\}$ is not nice, then the only edge not incident to $u$ or $x$ is the edge $vw$, contradicting our choice of $G$ with $|V(G)| \geq 5$.  
Assume that $G - \{u,x\}$ is nice.  By Lemma \ref{twins}, $\mind(G) \leq \max\{2,\mind(G - \{u,x\})\}$, and by the induction hypothesis, $\mind(G - \{u,x\}) \leq \partial_2(G - \{u,x\}))$.  Thus,
\[ 
\mind(G) \leq \max\{2,\mind(G - \{u,x\})\} \leq \max\{2,\partial_2(G - \{u,x\})\} \leq \partial_2(G).
\]
If $u \notin N_G(x)$ and $w \in N_G(x)$, then the exact same argument holds as for $u \in N_G(x)$ and $w \notin N_G(x)$.  Having considered all possible neighbourhoods of $x$, we conclude that if $E_v \cap \mathcal{F}$ is nonempty, then $\mind(G) \leq \partial_2(G)$. \\


\noindent{\underline{Case 2: $E_v \cap \mathcal{F} = \emptyset$}}

Suppose that $E_v \cap \mathcal{F} =\emptyset$.  The argument proceeds as follows:  after choosing a ``good'' orientation $D$ of $G$, we will construct a matrix whose columns are linear combinations of $A_D$ with no column of $A_D$ being used more than $\partial_2(G)$ times and with nonzero permanent.  The result will then follow by Lemma \ref{lincomb}.

Let $D$ be an orientation of $G$ where the edges of $E_u \cup \{uv\}$ and $E_v$ are oriented toward $u$ and $v$, respectively, and the edges of $E_w \cup \{vw\}$ are oriented away from $w$
; see Figure \ref{orientedpath}.
\begin{figure}[h!]
\begin{center}
\scalebox{0.8}{
\begin{tikzpicture}
\clip(-5,-2.1) rectangle (5,2.5);
\draw[line width=2.6pt] (0,0)-- (-3,1);
\draw[line width=2.6pt] (0,0)-- (3,1);
\draw [line width=1pt] (-3.48,1.82)-- (-3,1);
\draw [line width= 1pt] (-3.98,1.16)-- (-3,1);
\draw [line width= 1pt] (-3.88,0.44)-- (-3,1);
\draw [line width= 1pt] (3.48,1.82)-- (3,1);
\draw [line width= 1pt] (3.98,1.16)-- (3,1);
\draw [line width= 1pt] (3.88,0.44)-- (3,1);
\draw [line width= 1pt] (-0.48,-1.52)-- (0,0);
\draw [line width= 1pt] (0.48,-1.58)-- (0,0);
\draw [-triangle 45, line width=2.3pt] (0,0) -- (-1.73,0.58);
\draw [-triangle 45, line width=2.3pt] (3,1) -- (1.53,0.51);
\draw [-triangle 45, line width=0.0pt] (-3.46,1.78) -- (-3.16,1.28);
\draw [-triangle 45, line width=0.0pt] (-3.98,1.16) -- (-3.4,1.07);
\draw [-triangle 45, line width=0.0pt] (-3.88,0.44) -- (-3.35,0.78);
\draw [-triangle 45, line width=0.0pt] (-0.48,-1.52) -- (-0.22,-0.69);
\draw [-triangle 45, line width=0.0pt] (0.47,-1.56) -- (0.22,-0.71);
\draw [-triangle 45, line width=0.0pt] (3,1) -- (3.29,1.5);
\draw [-triangle 45, line width=0.0pt] (3,1) -- (3.66,1.11);
\draw [-triangle 45, line width=0.0pt] (3,1) -- (3.55,0.65);
\draw (-4.75,1.5) node[anchor=north west] {\large $E_u$};
\draw (-0.35,-1.55) node[anchor=north west] {\large $E_v$};
\draw (4,1.5) node[anchor=north west] {\large $E_w$};
\fill [color=black] (0,0) circle (4pt);
\draw[color=black] (0.05,0.45) node {\large $v$};
\fill [color=black] (-3,1) circle (4pt);
\draw[color=black] (-2.8,1.3) node {\large $u$};
\fill [color=black] (3,1) circle (4pt);
\draw[color=black] (2.8,1.3) node {\large $w$};
\end{tikzpicture}
}
\end{center}
\caption{An orientation $D$ of a graph $G$ with an induced $2$-path $uvw$}\label{orientedpath}
\end{figure}
Let $c_{uv}$ and $c_{vw}$ be the columns of $A_D$ associated with the edges $uv$ and $vw$, respectively, and let $c = c_{uv} - c_{vw}$; see Figure \ref{columnoperations}.
\begin{figure}[h!]
{ \footnotesize
\begin{eqnarray*}
\bordermatrix{~ & c_{uv} & c_{vw} & \cdots \cr
                  uv & 0 & -1 & \cr
                  vw & 1 & 0 & \cr
                  \cr
                    & 1 & 0 & \cr
                 E_u  & \vdots & \vdots & \cr
                    & 1 & 0 & \cr
                  \cr
                    & 0 & -1 & \cr
                 E_w & \vdots & \vdots & \cdots \cr
                    & 0 & -1 & \cr
                  \cr
                    & 1 & 1 & \cr
                 E_v  & \vdots & \vdots & \cr
                    & 1 & 1 & \cr
                  \cr
                    & 0 & 0 & \cr
                 E^*  & \vdots & \vdots & \cr
                    & 0 & 0 & \cr}
     &\implies& 
    c = \bordermatrix{~  \cr
                  & 1  \cr
                  & 1  \cr
                  \cr
                    & 1  \cr
                 & \vdots  \cr
                    & 1  \cr
                  \cr
                    &  1  \cr
                & \vdots  \cr
                    &  1  \cr
                  \cr
                    & 0  \cr
                  & \vdots   \cr
                    & 0 \cr
                  \cr
                    & 0  \cr
                  & \vdots   \cr
                    & 0   \cr}
\end{eqnarray*}}
\caption{An operation on two columns of $A_D$}\label{columnoperations}
\end{figure}

We must still concern ourselves with the possibility that deleting $u$ and $w$ from $G$ gives a graph which is not nice.  If a component of $G - \{u,w\}$ is isomorphic to $K_2$, then one vertex of this component must be adjacent to either $u$ or $w$ in $G$.  Let $\mathcal{F} = \{f_1, \ldots, f_k\}$.  For each $f_i \in \mathcal{F}$, let $e_i$ be an edge from $E_u$ or $E_w$ to which $f_i$ is adjacent.  Let $F$ denote this collection of edges from $E_u \cup E_w$, and let $F_u = \{e \,:\, e \in E_u \cap F\}$ and $F_w = \{e \,:\, e \in E_w\cap F\}$.  Each edge $f_i \in \mathcal{F}$ will be oriented away from its shared endpoint with $e_i$.  

Let $H = G - \{u,w\} - \mathcal{F}$ and $D(H)$ be the corresponding sub-digraph of $D$.  Since we have removed all components isomorphic to $K_2$, $H$ is nice.  Since $H$ has fewer vertices than $G$, by the induction hypothesis, $\mind(H) \leq \partial_2(H)$.  Hence, there exists a matrix $L_H$ consisting of columns of $A_{D(H)}$, 
none repeated more than $\partial_2(H)$ times, with $\per(L_H) \neq 0$.  Let $K$ denote the sequence of edges which indexes the columns of $L_H$.
Recall that, for an $m \times n$ matrix $A$, $A^{(k)}$ is the $m \times kn$ matrix consisting of $k$ consecutive copies of $A$ (see page \pageref{kcopydefn}).  Let $L_G$ be the following block matrix: 
\[
L_G = \Big( \,\,c^{\left(d(u) + d(w)\right)} \,\,\Big|\,\,  A_D(F) \,\,\Big|\,\, A_D(K)\,\, \Big) = \bordermatrix{ ~ & ~ & ~& ~ \cr
{\scriptstyle E_u \cup E_w \cup \{uv,vw\}} & J_{d(u) + d(w)} & K_1 & X_1 \cr
 \hfill {\scriptstyle \mathcal{F} }& 0 & K_2 & X_2 \cr
 \hfill {\scriptstyle E(H)} & 0 & 0 & L_H \cr
},
\]
where the blocks are as follows:
	\begin{itemize}
	\item $J_{d(u) + d(w)}$ is the $\left(d(u) + d(w)\right) \times \left(d(u) + d(w)\right)$ all $1$'s matrix.
	\item $K = \left(\begin{smallmatrix} K_1 \\ K_2 \end{smallmatrix} \right)$ having entries depending on whether the column is indexed by $e_i \in F_u$ or $e_i \in F_w$.  If the column is indexed by $e_i \in F_u$, then the column will have (i) $1$ in each row indexed by the other edges from $E_u$, (ii) $1$ in the row indexed by $uv$, (iii) $-1$ in the row indexed by $f_i$, and (iv) $0$ in all other entries.
Otherwise, the entries follow the same pattern with the signs swapped.  Since the column associated with $e_i$ has only one non-zero entry in the rows indexed by $\mathcal{F}$, $K_2$ is diagonal with $|F_u|$ entries being $-1$ and $|F_w|$ entries being $1$.
	\item $X = \left(\begin{smallmatrix} X_1 \\ X_2\end{smallmatrix} \right)$, the $\left(|E(G)|-|E(H)|\right) \times |E(H)|$ submatrix of $A_D(K)$ whose rows are indexed by $E(G) \setminus E(H)$; and
	\item $L_H$, is the matrix with $\per(L_H) \neq 0$ defined above.
	\end{itemize}

Since $J_{d(u) + d(w)}$, $K_2$, and $L_H$ are all square matrices,  
\begin{eqnarray*}
\per(L_G) &=& \per\left(J_{d(u) + d(w)}\right)\cdot\per(K_2)\cdot\per(L_H) \\
&=& \left(d(u) + d(w)\right)!\cdot \left(-1\right)^{|F_u|} \left(1\right)^{|F_w|}\cdot\per(L_H) \neq 0.
\end{eqnarray*}
Since the sets $\{uv, vw\}$, $F$, and $E(H)$ are pairwise disjoint, no column is used more than $\max\{d(u)+d(w), 1, \mind(H)\}$ times.  
Lemma \ref{lincomb} states that $\pind(A_D) \leq \max\{d(u)+d(w), 1, \mind(H)\}$.  Since $\pind(A_D) = \mind(G)$ (Lemma \ref{mindtopind}.1) and $\mind(H) \leq \partial_2(H)$ by induction,
\[
\mind(G) \leq \max\{d(u)+d(w), 1, \mind(H)\} \leq \max\{\partial_2(G), 1, \partial_2(H)\} \leq \partial_2(G). \qedhere
\] 
\end{proof}

In the proof of Theorem \ref{edgemain}, we may think of $A_D$ as a submatrix of $M_D$, and so we are constructing a matrix with nonzero permanent using no column corresponding to a vertex of the graph.  It follows that one may assign arbitrary lists of size $1$ to each vertex; in other words Theorem \ref{edgemain} implies the following results on $(k,l)$-weight choosability:

\begin{cor}\label{chooseedge}
If $G$ is a nice graph, then it is $(1, \partial_2(G) + 1)$-weight choosable.
\end{cor}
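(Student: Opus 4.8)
The plan is to observe that the matrix certificate constructed in the proof of Theorem~\ref{edgemain} never uses a column of $M_D$ arising from a vertex, and to convert this observation — via the Combinatorial Nullstellensatz applied to $T_D$ rather than to $P_D$ — into the stronger $(1,\partial_2(G)+1)$ statement. First I would dispose of the trivial cases: if $|V(G)| \le 2$ then, since $G$ is nice, $G$ is edgeless, and any assignment of a singleton list to each vertex yields a vacuously proper total weighting; so I may assume $|V(G)| \ge 3$, in which case Theorem~\ref{edgemain} gives $\mind(G) = \mind(P_D) \le \partial_2(G)$ for any orientation $D$ of $G$.

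Fix such a $D$, with $l = m+n$. For each arc $(u,v)$ of $D$ one has $Y_v - Y_u = (X_v - X_u) + (x_{m+j} - x_{m+i})$, where $x_{m+i}$ and $x_{m+j}$ are the variables attached to $u$ and $v$. Hence, when $T_D = \prod_{(u,v)\in E(D)}(Y_v - Y_u)$ is expanded, the only monomials containing none of the vertex variables $x_{m+1},\ldots,x_{m+n}$ are those obtained by selecting the term $X_v - X_u$ from every factor; that is, the sum of the vertex-variable-free monomials of $T_D$ is exactly $P_D(x_1,\ldots,x_m)$. Since $P_D$ is homogeneous of degree $m$ and $\mind(P_D) \le \partial_2(G)$, there is a monomial $\prod_{i=1}^{m} x_i^{t_i}$ of $P_D$ with nonzero coefficient, with $t_i \le \partial_2(G)$ for every $i$, and with $\sum_{i=1}^m t_i = m$. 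Regarded as a monomial in $x_1,\ldots,x_l$, this has nonzero coefficient in $T_D$, and its total degree equals the total degree $m$ of $T_D$.

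Finally I would apply the Combinatorial Nullstellensatz to $T_D$ with the exponent vector $(t_1,\ldots,t_m,0,\ldots,0)$. Given any total list-assignment of $G$ in which each edge $e_i$ receives a list $L_{e_i}$ of size $\partial_2(G)+1 > t_i$ and each vertex $v_j$ receives a list $L_{v_j}$ of size $1 > 0$, the Nullstellensatz produces weights $w(e_i) \in L_{e_i}$ and $w(v_j) \in L_{v_j}$ with $T_D(w) \neq 0$, which — as noted just before Lemma~\ref{mind} — means that $w$ is a total weighting that properly colours $V(G)$ by sums. Hence $G$ is $(1,\partial_2(G)+1)$-weight choosable. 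I expect the only step needing care to be the identification of $P_D$ with the vertex-variable-free part of $T_D$ together with the matching of the prescribed list sizes against the exponents; the construction of the nonzero-permanent matrix itself, which is the real work, has already been carried out inside the proof of Theorem~\ref{edgemain}. Equivalently, one may phrase the argument entirely in terms of permanents: the block matrix $L_G$ built there is assembled solely from columns of $A_D$, so it is also a square submatrix of $M_D^{(\partial_2(G))}$ with nonzero permanent that uses no column of $B_D$, which is precisely the permanent-rank condition underlying $(1,\partial_2(G)+1)$-weight choosability.
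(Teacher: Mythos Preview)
Your proposal is correct and follows essentially the same approach as the paper. The paper simply remarks that the matrix with nonzero permanent produced in the proof of Theorem~\ref{edgemain} uses only columns of $A_D \subset M_D$, so the vertex columns of $B_D$ are never needed; you make the same point, but phrase it first on the polynomial side by identifying $P_D$ with the vertex-variable-free part of $T_D$ and then invoking the Combinatorial Nullstellensatz directly, before giving the permanent phrasing as an equivalent. Your explicit handling of the case $|V(G)|\le 2$ (where a nice graph is edgeless) is a small point the paper leaves implicit.
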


We now present an immediate corollary to Theorems \ref{edgemain}, which follows from the following simple observation:

\begin{prop}
If $G$ is a $d$-degenerate graph with maximum degree $\Delta(G)$, then $\partial_2(G) \leq \Delta(G) + d$.
\end{prop}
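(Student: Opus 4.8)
The plan is to bound $\delta_2(H)$ for an arbitrary induced subgraph $H \leq_i G$ and then take the maximum. Fix such an $H$, and suppose $H$ contains a pair of vertices at distance exactly $2$ (otherwise $\partial_2$ contributes nothing larger via this $H$, or the convention $2\Delta(G)$ must be checked separately — see below). Pick $u,w \in V(H)$ with $d_H(u,w) = 2$; since $H$ is $d$-degenerate (every induced subgraph of $G$, hence of $H$, has a vertex of degree at most $d$), I first want to locate a low-degree vertex among $u,w$. The cleanest route: among all pairs at distance $2$ in $H$, I do not get to choose which vertex is small, so instead I argue directly. Consider the induced subgraph $H' = H[\{u,w\}]$... this is too small; better to use the vertex provided by degeneracy.

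Here is the key step I would carry out. Let $H \leq_i G$ be arbitrary with a pair at distance $2$. By $d$-degeneracy, $H$ has a vertex $z$ with $d_H(z) \leq d$. If $z$ has two neighbours $u, w$ in $H$ that are non-adjacent, then $u,w$ are at distance exactly $2$ in $H$, and $d_H(u) + d_H(w) \leq \Delta(H) + \Delta(H) \leq 2\Delta(G)$ — which is not good enough. So this naive approach overshoots; I need to be cleverer and use $z$ itself. The trick: instead of bounding $\delta_2(H)$ via an arbitrary pair, observe that $\delta_2(H)$ is the minimum of $d_H(u)+d_H(v)$ over distance-$2$ pairs, so I only need \emph{one} good pair. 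Take the degenerate vertex $z$ of $H$; if all neighbours of $z$ are pairwise adjacent, delete $z$ and recurse on $H - z$ (still $d$-degenerate, still an induced subgraph of $G$). Eventually I reach an induced subgraph in which some low-degree vertex $z$ (degree $\leq d$) has two non-adjacent neighbours $u,w$; then $z$ is a common neighbour witnessing $d(u,w) \le 2$. But I still need $d_H(u,w) = 2$ exactly — that holds since $u \not\sim w$ — and I need $d_H(u) + d_H(w)$ small. This is still $\le 2\Delta$, not $\Delta + d$.

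The resolution — and what I now believe is the intended argument — is to take the pair $\{z, w\}$ where $z$ is the degenerate vertex and $w$ is a vertex at distance exactly $2$ from $z$. If such $w$ exists in $H$, then $\delta_2(H) \leq d_H(z) + d_H(w) \leq d + \Delta(H) \leq d + \Delta(G)$, giving the bound immediately. If no vertex of $H$ is at distance exactly $2$ from $z$, I pass to $H - z$ (deleting $z$ cannot create new distance-$2$ pairs but may destroy some; however $H-z$ is still $d$-degenerate and induced in $G$, so by induction on $|V(H)|$ its $2$-degeneracy is $\le \Delta(G)+d$, and since $\partial_2(H) = \max(\delta_2(H), \partial_2 \text{ of proper induced subgraphs})$ we only ever need the bound on those subgraphs that actually realize the maximum). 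Formally: I would do induction on $|V(G)|$, letting $H$ be an induced subgraph realizing $\partial_2(G) = \delta_2(H)$, pick the $d$-degenerate vertex $z$ of $H$, and split on whether $z$ participates in some distance-$2$ pair of $H$; in the "yes" case $\delta_2(H) \le d_H(z) + \Delta(H) \le d + \Delta(G)$, and in the "no" case I replace $z$ by noting $\delta_2(H) = \delta_2(H')$ for some $H' \subsetneq H$ (since every distance-$2$ pair of $H$ avoids $z$, it survives in $H - z$... provided the path of length $2$ between them also avoids $z$, which requires a short argument). The main obstacle is exactly this last point: ensuring that a minimizing distance-$2$ pair in $H$ not involving $z$ remains at distance $2$ after deleting $z$ — i.e., that it has a connecting $2$-path avoiding $z$, or handling the case where $z$ is the unique common neighbour. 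I expect one handles it by choosing $z$ to be a vertex of minimum degree in $H$ that additionally is not the sole common neighbour of any minimizing pair, or by observing that if $z$ is such a sole common neighbour then $d_H(u) = d_H(w)$-type constraints already force $\delta_2(H) \le d + \Delta(G)$ anyway.
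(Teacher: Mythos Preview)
Your core idea---for each induced subgraph $H$, take a vertex $z$ with $d_H(z)\le d$ and pair it with a vertex at distance exactly $2$ from $z$---is exactly the paper's approach, and that part is fine.

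The gap is in how you handle the case where $z$ has no vertex at distance $2$. You try to delete $z$ and recurse, then worry (correctly) that a minimizing distance-$2$ pair in $H$ might lose its only connecting $2$-path when $z$ is removed. You never resolve this; the suggested fixes (``choose $z$ more carefully'' or ``constraints already force $\delta_2(H)\le d+\Delta(G)$ anyway'') are left as hopes rather than arguments.

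The paper avoids all of this with a one-line observation you missed. If (in the connected component of $z$) every vertex is adjacent to $z$, then that component has at most $d_H(z)+1\le d+1$ vertices, so \emph{every} vertex there has degree at most $d$. Hence any pair of vertices in that component has degree sum at most $2d\le \Delta(G)+d$, and in particular any distance-$2$ pair there already satisfies the bound. No recursion is needed: either $z$ has a distance-$2$ partner (giving $\delta_2(H)\le d+\Delta(G)$ immediately), or the component of $z$ is this small and the bound holds for every pair inside it; other components can be handled the same way since each is itself a $d$-degenerate induced subgraph of $G$.
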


\begin{proof}
Let $H$ be an induced subgraph of $G$ and let $v \in V(H)$ have degree $d(v) \leq d$.  If every vertex of $H$ is adjacent to $v$, then $H$ must be a clique and any pair of vertices have degrees summing to at most $2d \leq \Delta(G) + d$.  Otherwise, there exists some vertex at distance $2$ from $v$, and these two vertices have degrees that sum to at most $\Delta(G) + d$.
\end{proof}

\begin{cor}
If $G$ is a nice $d$-degenerate graph with maximum degree $\Delta(G)$, then $G$ is $(1, \Delta(G) + d + 1)$-weight choosable or, alternately, $(1, \Delta(G) + \col(G))$-weight choosable.
\end{cor}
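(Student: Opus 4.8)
The plan is to synthesise the two preceding results. The substantive input is Corollary~\ref{chooseedge}, which states that every nice graph $G$ is $(1,\partial_2(G)+1)$-weight choosable; this rests on Theorem~\ref{edgemain} together with the observation that the permanent witnessing $\mind(G)\le\partial_2(G)$ uses no column of $M_D$ coming from a vertex, so singleton vertex lists suffice. On top of this I would apply the Proposition just proved: a $d$-degenerate graph of maximum degree $\Delta(G)$ satisfies $\partial_2(G)\le\Delta(G)+d$, whence $\partial_2(G)+1\le\Delta(G)+d+1$.

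The only gap to fill is the elementary monotonicity of $(1,l)$-weight choosability in $l$: if $G$ is $(1,l)$-weight choosable and $l'\ge l$, then $G$ is $(1,l')$-weight choosable, since from any total list-assignment with singleton vertex lists and edge lists of size $l'$ one may delete $l'-l$ elements of each edge list, apply the hypothesis to the resulting smaller assignment, and read off a weighting that uses only the surviving values. Combining this with the two displayed inequalities gives that $G$ is $(1,\Delta(G)+d+1)$-weight choosable. For the alternative phrasing, recall that $\col(G)=\partial(G)+1$; choosing $d=\partial(G)$ turns $\Delta(G)+d+1$ into $\Delta(G)+\col(G)$, so this is the same bound at the best admissible value of $d$ (for larger $d$ the first form is merely weaker).

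I do not anticipate any real obstacle: everything difficult is already contained in Theorem~\ref{edgemain}/Corollary~\ref{chooseedge} and in the Proposition, and the corollary is a routine combination of them. The only points needing a word of care are the trivial list-size monotonicity above and the fact that isolated vertices in a nice graph are harmless — so the ``at least $3$ vertices'' hypothesis of Theorem~\ref{edgemain} imposes no restriction here, which is already reflected in the statement of Corollary~\ref{chooseedge}.
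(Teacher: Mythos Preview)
Your proposal is correct and matches the paper's approach exactly: the paper presents this corollary as immediate from Corollary~\ref{chooseedge} together with the preceding Proposition bounding $\partial_2(G)\le\Delta(G)+d$, without writing out a proof. Your remarks on list-size monotonicity and the identity $\col(G)=\partial(G)+1$ simply make explicit what the paper leaves implicit.
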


\section{Weight choosability of graph products}\label{more}

Though the bounds presented in Section \ref{ch:alon:edge} and the corollaries above represent progress on the List 1-2-3 Conjecture, there is still room for improvement.  We now consider some classes of graphs where smaller upper bounds can be obtained, in particular the cartesian product of two graphs.  The following decomposition lemma on $\mind(G)$ provides an approach for such graphs:

\begin{lem}\label{decomp}
Let $G$ be a graph, and let $H$ be an induced subgraph of $G$ containing a $2$-factor.  Let $X$ be a minimal edge cut separating $V(H)$ from $V(G) \setminus V(H)$.  If the components of $G - H - X$ are $C_1, \ldots, C_k$, then 
\begin{eqnarray*}
\mind(G) &\leq& \max\{|X| + \mind(H), \mind(G-X)\} \\
 &=& \max\{\mind(H) + |X|, \mind(C_1), \ldots,  \mind(C_k)\}.
\end{eqnarray*}
\end{lem}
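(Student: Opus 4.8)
The plan is to mimic the structure of the proof of Theorem \ref{edgemain}, Case 2: pick a good orientation of $G$, then build a square matrix whose columns are linear combinations of the columns of $A_D$, with no column of $A_D$ used too many times, and with nonzero permanent; Lemma \ref{lincomb} then converts this into the desired bound on $\pind(A_D) = \mind(G)$ (Lemma \ref{mindtopind}.1). The role played there by the pair $\{u,w\}$ and the edges incident to it is played here by the whole subgraph $H$ together with the edge cut $X$; the $2$-factor of $H$ is the device that lets us ``absorb'' the $|X|$-fold repetition of a column into a single permanent factor, just as the two twin edges $uv,vw$ were combined into the column $c$ in Figure \ref{columnoperations}.

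Concretely, I would first orient $G$ as follows: fix an orientation $D_H$ of $H$ realizing a matrix $L_H$ built from columns of $A_{D_H}$ with no column repeated more than $\mind(H)$ times and $\per(L_H)\neq 0$ (induction / the hypothesis that $H$ has finite monomial index via its $2$-factor — note a graph with a $2$-factor is nice, so Lemma \ref{mind} applies); orient each component $C_i$ of $G - H - X$ so as to realize, inside it, a matrix $L_{C_i}$ of columns of $A_{D(C_i)}$ with no column repeated more than $\mind(C_i)$ times and nonzero permanent; and orient the cut edges of $X$ all ``out of'' $H$. The key computation is the analogue of the column $c$: since $H$ has a $2$-factor, the $2$-factor is a disjoint union of directed cycles in $D_H$, and for each such cycle the alternating sum of the corresponding edge-columns of $A_D$ has a very simple form on the rows indexed by $V(H)$-incident edges — in fact the $2$-factor lets us produce, for the rows indexed by $X$, a column $c$ that is constant ($1$, say) on all rows indexed by $X$ and $0$ on all rows indexed by $E(H)$ and by $E(C_i)$. (This is exactly the mechanism by which $|X|$ copies of $c$ will contribute a single factor of $|X|!$ to the permanent.) The block matrix I would assemble is, schematically,
\[
L_G = \Bigl(\, c^{(|X|)} \,\Bigm|\, A_D(K_H) \,\Bigm|\, A_D(K_{C_1}) \,\Bigm|\, \cdots \,\Bigm|\, A_D(K_{C_k}) \,\Bigr),
\]
where $K_H$ indexes $L_H$ and $K_{C_i}$ indexes $L_{C_i}$; by the choice of orientation this is block lower-triangular with diagonal blocks $J_{|X|}$, $L_H$, $L_{C_1},\dots,L_{C_k}$ (the cut rows needing the $2$-factor trick to clear), so $\per(L_G) = |X|!\cdot\per(L_H)\cdot\prod_i\per(L_{C_i}) \neq 0$. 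A column of $A_D$ indexed by an edge of $H$ is used at most $\mind(H)$ times, a column indexed by an edge of $C_i$ at most $\mind(C_i)$ times, and each cut edge's column is used at most $|X|$ times; a column of a $2$-factor edge of $H$ may pick up an extra $|X|$ uses from the $c^{(|X|)}$ block, for a total of at most $|X| + \mind(H)$. Lemma \ref{lincomb} then gives $\mind(G) = \pind(A_D) \le \max\{|X| + \mind(H), \mind(C_1), \dots, \mind(C_k)\}$, and since $G - X$ has components $C_1,\dots,C_k$ together with $H$, and $\mind(H) \le |X| + \mind(H)$, this is exactly $\max\{|X|+\mind(H), \mind(G-X)\}$, using Proposition \ref{componentbound} for the last rewriting.

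The main obstacle I anticipate is verifying the ``$2$-factor trick'' cleanly: I need the alternating sum along each directed cycle of the $2$-factor to kill all entries in rows indexed by $E(H)$ (so that the block structure is genuinely triangular and $\per(L_H)$ factors out untouched) while leaving a controlled, nonzero-permanent pattern in the $X$-rows — this requires care about how a cut edge of $X$ meets $H$ and about the signs introduced by the cycle orientations, and is the place where the hypothesis ``$H$ contains a $2$-factor'' (rather than merely ``$H$ is nice'') is actually used. A secondary point to be careful about is minimality of the cut $X$: minimality guarantees every edge of $X$ has exactly one endpoint in $V(H)$ and is needed so that the $X$-rows of $L_G$ behave as claimed; I would check this is used exactly where the all-ones column $c$ is constructed. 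The bookkeeping of which component of $G - H - X$ each non-$H$, non-$X$ edge lies in is routine once the orientation is fixed.
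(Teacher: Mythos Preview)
Your plan is the paper's proof: orient the $2$-factor cycles, form a column $c$ from their edge-columns that is supported precisely on the $X$-rows, assemble $\bigl(c^{(|X|)}\mid A_D(K)\bigr)$ into a block-triangular matrix with diagonal blocks an $|X|\times|X|$ constant-row matrix and $L_{G-X}$, and apply Lemma \ref{lincomb}. Two small corrections to your sketch: take the \emph{straight} sum $c=\sum_{e\in F} c_e$ rather than an alternating one (along a directed cycle the alternating sum kills the $X$-rows as well, whereas the straight sum gives $\pm 2$ there and $0$ on all rows indexed by $E(H)\cup\bigcup_i E(C_i)$), and note that cut-edge columns are not used in $L_G$ at all --- only the $2$-factor edges of $H$ pick up the extra $|X|$ uses.
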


\begin{proof}
Let $|V(H)| = v$ and $F = \{e_1, \ldots, e_v\}$ be a $2$-factor of $H$.  Let $D$ be an orientation of $G$ such that the cycles of $F$ are directed.  Define the column vector $c = \sum_{i=1}^v c_i$ where $c_i$ is the column of $A_D$ corresponding to $e_i$.  For each $e \in E(H) \setminus F$ there are two edges of $F$ incident to each of the head and tail of $e$, and for each $e \in F$ there is one edge of $F$ incident to each of the head and tail of $e$.  Hence, the entries of $c$ are nonzero in the rows indexed by the edges of $X$ and $0$ in all other entries.

There exists a matrix $L_{G-X}$ consisting of columns of $A_{G-X}$ with no column of $A_D$ repeated more than $\mind(G - X)$ times and $\per(L_{G-X}) \neq 0$.  Let $K$ denote the sequence of edges of $G - X$ which index $A_{G-X}$.  Consider the following matrix:
\begin{eqnarray*}
L = \Big(c^{\left(|X|\right)} \,\,\,\, A_D(K) \Big)  = \begin{pmatrix}
 M & N \\
 0 & L_{G-X}
  \end{pmatrix},
\end{eqnarray*}
where $\left( M\,\, N\right)$ is indexed by $X$, each row of $M$ is constant, and every entry of $M$ is nonzero.  Any column indexed by $e \in E(G) \setminus F$ is used at most $\mind(G-X)$ times in the construction of $L$, and any edge from $F$ is used at most $|X| + \mind(H)$ times.  Clearly, $\per(L) = \per(M)\per(L_G-X) \neq 0$, and hence $\mind(G) = \max\{|X| + \mind(H), \mind(G-X)\}$.  Since $\mind(G-X) = \max\{ \mind(C_1), \ldots,  \mind(C_k), \mind(H)\}$ by Proposition \ref{componentbound}, the result follows.
\end{proof}


Recall that the Cartesian product of two graphs $G$ and $H$, denoted by $G\,\Box\,H$, is defined as the graph having vertex set $V(G) \times V(H)$ where two vertices $(u,u')$ and $(v,v')$ are adjacent if and only if either $u = v$ and $u'$ is adjacent to $v'$ in $H$ or $u' = v'$ and $u$ is adjacent to $v$ in $G$.  Some results on $\Se(G)$ for Cartesian products of graphs are given in \cite{Ben1}; for instance, if $G$ and $H$ are regular and bipartite, then $\Se(K_n \,\Box\ H)$, $\Se(C_t \,\Box\ H)$, and $\Se(G \,\Box\ H)$ are at most $2$ for $n \geq 4$, and $t \geq 4, t \neq 5$.  Lemma \ref{decomp} may be used to bound $\chSe(G \,\Box\, H)$ for many more graphs $G$ and $H$.
Note that, for the graph $G\,\Box\,H$ and vertex $v \in V(G)$, the subgraph induced by the set of vertices $\{(v,x) \,:\, x \in V(H)\}$ is denoted $(v,H)$.





\begin{thm}
Let $H$ be a regular graph on $n \geq 3$ vertices which contains a 2-factor.  If $G$ is a $d$-degenerate graph, then \textup{(1)} $\mind(G\,\Box\,H) \leq nd + \mind(H)$ and \textup{(2)} $\chSe(G\,\Box\,H) \leq nd + \mind(H) + 1.$
\end{thm}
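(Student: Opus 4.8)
The plan is to strip off the copies of $H$ inside $G\,\Box\,H$ one at a time, following a degeneracy ordering of $G$, and to control the monomial index at each stage with the decomposition Lemma~\ref{decomp}. It suffices to prove part~(1), since part~(2) then follows from Lemma~\ref{mind}.1 once we observe that $G\,\Box\,H$ is nice: as $H$ contains a $2$-factor it has minimum degree at least $2$, so every vertex $(u,x)$ of $G\,\Box\,H$ has degree $d_G(u)+d_H(x)\geq 2$ and hence no component is a $K_2$. (We also use that $\mind(H)$ is finite; since $H$ is nice with $n\geq3$ vertices, $\mind(H)\leq\partial_2(H)<\infty$ by Theorem~\ref{edgemain}.)

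Fix an ordering $v_1,\ldots,v_m$ of $V(G)$ witnessing that $G$ is $d$-degenerate, so that each $v_i$ has at most $d$ neighbours among $\{v_{i+1},\ldots,v_m\}$, and write $G_i=G[\{v_i,\ldots,v_m\}]$. For $1\leq i\leq m-1$, the set $\{(v_i,x):x\in V(H)\}$ induces in $G_i\,\Box\,H$ a copy $H_i$ of $H$; being isomorphic to $H$, it is an induced subgraph containing a $2$-factor with $\mind(H_i)=\mind(H)$. Let
\[
X_i=\{\,(v_i,x)(v_j,x)\ :\ v_j\in N_{G_i}(v_i),\ x\in V(H)\,\}
\]
be the set of all edges of $G_i\,\Box\,H$ joining $V(H_i)$ to its complement. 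This is an edge cut separating $V(H_i)$ from $V(G_i\,\Box\,H)\setminus V(H_i)$, and it is minimal, since keeping even a single edge of $X_i$ leaves a path across the cut. Moreover, since the Cartesian product has no ``diagonal'' edges, $|X_i|=d_{G_i}(v_i)\cdot n\leq nd$, and deleting $V(H_i)$ together with $X_i$ from $G_i\,\Box\,H$ leaves exactly $G_{i+1}\,\Box\,H$. Lemma~\ref{decomp} thus yields
\begin{align*}
\mind(G_i\,\Box\,H)
&\leq \max\{\,|X_i|+\mind(H_i),\ \mind(G_{i+1}\,\Box\,H)\,\}\\
&\leq \max\{\,nd+\mind(H),\ \mind(G_{i+1}\,\Box\,H)\,\}.
\end{align*}

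It remains to run the induction downward in $i$. The base case is $i=m$: then $G_m$ is a single vertex, $G_m\,\Box\,H\cong H$, and $\mind(G_m\,\Box\,H)=\mind(H)\leq nd+\mind(H)$. Feeding this into the displayed inequality propagates the bound $\mind(G_i\,\Box\,H)\leq nd+\mind(H)$ down to $i=1$, which is part~(1); part~(2) then follows from Lemma~\ref{mind}.1. The only points that need care are checking that $X_i$ is a \emph{minimal} edge cut (so that Lemma~\ref{decomp} applies verbatim) and the bookkeeping that at stage $i$ exactly $d_{G_i}(v_i)\leq d$ vertices of $G_i$ are adjacent to $v_i$; I do not expect either to be a genuine obstacle, as essentially all the content is in applying Lemma~\ref{decomp} with the right subgraph and cut.
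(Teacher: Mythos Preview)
Your argument is correct and essentially identical to the paper's: both peel off one copy of $H$ at a time along a degeneracy ordering of $G$, apply Lemma~\ref{decomp} at each step with the copy $(v,H)$ and its cut of size at most $nd$, and then invoke Lemma~\ref{mind} for part~(2). The paper phrases this as induction on $|V(G)|$ after first reducing to connected $G$, but the content is the same.
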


\begin{proof}
We may assume that $G$ is connected.  The proof of (1) is by induction on $|V(G)|$; the statement is true when $G$ is a single vertex, since $d= 0$ and $\chSe(H) \leq \mind(H)+1$ is guaranteed by Lemma \ref{mind}.

Suppose $|V(G)| \geq 2$.  Let $v \in V(G)$ have degree at most $d$, and let $X$ be the minimal edge cut for $(v,H)$.  Since $|X| =n\cdot d_G(v)$ and $G-v$ is $d$-degenerate, Lemma \ref{decomp} implies that
\begin{eqnarray*} 
\mind(G\,\Box\,H) &\leq& \max\{\mind(H) + nd_G(v), \mind((G\,\Box\,H)-X)\} \\
&\leq& \max\{\mind(H) + nd, \mind((G\,\Box\,H) - (v,H)\} \\
&\leq& \max\{\mind(H) + nd, \mind((G-v)\,\Box\,H)\} \\
&\leq& \max\{\mind(H) + nd, nd + \mind(H) \} \\
&\leq& \mind(H) + nd.
\end{eqnarray*}
Part (2) follows directly from Lemma \ref{mind}.
\end{proof}

Since $\mind(K_n)$ is at most 2 by Corollary \ref{twincor}, the following corollary is obtained:

\begin{cor}
For any integer $n \geq 3$ and any $d$-degenerate graph $G$, $\chSe(G \,\Box\, K_n) \leq nd + 3$.
\end{cor}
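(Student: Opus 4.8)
The plan is to apply the theorem immediately preceding this corollary with $H = K_n$, so the main task is simply to check that $K_n$ satisfies its hypotheses and then to bound $\mind(K_n)$. First I would verify the hypotheses on $H$: for $n \geq 3$ the complete graph $K_n$ is $(n-1)$-regular, and it is Hamiltonian, so it contains a Hamilton cycle, which is a $2$-factor (for $n=3$ the triangle $K_3$ is itself a $2$-factor). Thus $K_n$ with $n \geq 3$ is precisely a regular graph on at least $3$ vertices containing a $2$-factor, as required by the theorem.

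Next I would invoke Corollary~\ref{twincor}, which states that $\mind(K_n) \leq 2$. Substituting $H = K_n$ into part (2) of the theorem then yields
\[
\chSe(G \,\Box\, K_n) \leq nd + \mind(K_n) + 1 \leq nd + 2 + 1 = nd + 3,
\]
which is exactly the asserted bound; part (1) of the theorem similarly gives the companion estimate $\mind(G \,\Box\, K_n) \leq nd + 2$ if one wishes to record it.

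There is essentially no obstacle here: the corollary is a direct specialization of the theorem, and the only point requiring even a moment's attention is confirming the $2$-factor condition for $K_n$, which is immediate. One could alternatively unwind Lemma~\ref{decomp} applied to the subgraphs $(v,K_n)$ directly, mirroring the proof of the theorem, but invoking the theorem as a black box is cleaner and keeps the argument to a single line of inequalities.
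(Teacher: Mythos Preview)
Your proposal is correct and follows exactly the paper's own reasoning: the corollary is obtained by substituting $H=K_n$ into the preceding theorem and invoking Corollary~\ref{twincor} for the bound $\mind(K_n)\leq 2$. The only extra detail you supply---checking that $K_n$ with $n\geq 3$ has a Hamilton cycle and hence a $2$-factor---is implicit in the paper and perfectly fine to make explicit.
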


Since such a graph is $d(n-1)$-degenerate, this improves the bound of $\chSe(G \,\Box\, K_n) \leq 2d(n-1)$ given by Pan and Yang \cite{PY12}.


\section{Acknowledgements}
The majority of this work was completed as part of the author's doctoral thesis at Carleton University.  Thanks are due to the Natural Sciences and Engineering Research Council of Canada and to Carleton University for their financial support, and to Brett Stevens for his valuable supervisory input.  

\bibliographystyle{plain}
\bibliography{references}

\end{document}